\DeclareMathOperator{\hdim}{\dim_H}
\DeclareMathOperator{\realizable}{re}
\DeclareMathOperator{\identity}{id}
\theoremstyle{plain}
\newtheorem{theorem}{Theorem}[section]
\newtheorem*{notheorem}{Theorem}
\newtheorem*{maintheorem}{Main Theorem}
\newtheorem{proposition}[theorem]{Proposition}
\newtheorem{claim}{Claim}
\theoremstyle{definition}
\theoremstyle{remark}
\newtheorem{remark}[theorem]{Remark}
\begin{document}

\title[Exceptional set of the law of large numbers in Pierce expansions]{An elementary proof that the set of exceptions to the law of large numbers in Pierce expansions has full Hausdorff dimension}

\author{Min Woong Ahn}
\address{Department of Mathematics Education, Silla University, 140, Baegyang-daero 700beon-gil, Sasang-gu, Busan, 46958, Republic of Korea}
\email{minwoong@silla.ac.kr}

\date{\today}

\subjclass[2020]{Primary 11K55, Secondary 26A18, 28A80, 37E05}
\keywords{Pierce expansion, Hausdorff dimension, exceptional set, law of large numbers}

\begin{abstract}
The digits of the Pierce expansion satisfy the law of large numbers. It is known that the Hausdorff dimension of the set of exceptions to the law of large numbers is $1$. We provide an elementary proof of this fact by adapting Jun Wu's method, which was originally used for Engel expansions. Our approach emphasizes the fractal nature of exceptional sets and avoids advanced machinery, thereby relying instead on explicit sequences and constructive techniques. Furthermore, our method opens the possibility of extending similar analyses to other real number representation systems, such as the Engel, L{\"u}roth, and Sylvester expansions, thus paving the way for further explorations in metric number theory and fractal geometry.
\end{abstract}

\maketitle

\tableofcontents

\section{Introduction} \label{introduction}

For $x \in [0,1]$, we denote the {\em Pierce expansion} or {\em alternating Engel expansion} of $x$ by $[d_1(x), d_2(x), \dotsc]_P$, i.e., 
\[
x = [d_1(x), d_2(x), \dotsc]_P \coloneqq \sum_{j=1}^\infty \left( (-1)^{j+1} \prod_{k=1}^j \frac{1}{d_k(x)} \right),
\]
where $( d_k(x) )_{k \in \mathbb{N}}$ is either 
\begin{enumerate}[label=\upshape(\roman*), ref=\roman*, leftmargin=*, widest=ii]
\item
a sequence of positive integers that satisfy $d_{k+1} (x) > d_k(x)$ for all $k \in \mathbb{N}$ or 
\item
a sequence of strictly increasing initial $n$ positive integer-valued terms followed by $\infty$ for the rest for some $n \in \mathbb{N}$,
\end{enumerate}
with the conventions $\infty \cdot \infty = \infty$, $1/\infty = 0$, and $c \cdot \infty = \infty$ for any $c>0$ (see \cite{Ahn23a} and \cite{Sha86}). The $d_k(x)$ are called the {\em digits} of the Pierce expansion of $x$, and the digits are dynamically obtained in the following way. Consider two maps $d_1 \colon [0,1] \to \mathbb{N} \cup \{ \infty \}$ and $T \colon [0,1] \to [0,1]$ defined by the following:
\begin{align} \label{Pierce algorithm 1}
d_1(x) \coloneqq \begin{cases} \lfloor 1/x \rfloor, &\text{if } x \neq 0; \\ \infty, &\text{if } x =0, \end{cases} 
\quad \text{and} \quad 
T(x) \coloneqq \begin{cases} 1 - d_1 (x) x, &\text{if } x \neq 0; \\ 0, &\text{if } x =0, \end{cases}
\end{align}
respectively, where $\lfloor \xi \rfloor$ denotes the largest integer not exceeding $\xi \in \mathbb{R}$. Then, the sequence of digits are recursively defined by the following:
\begin{align} \label{Pierce algorithm 2}
d_k(x) \coloneqq d_1 (T^{k-1}(x)) \quad \text{for each } k \in \mathbb{N}.
\end{align}
If $n \coloneqq \inf \{ j \in \mathbb{N} \cup \{ 0 \} : T^j (x) = 0 \}$ is finite, then $d_1(x) < \dotsb < d_n(x) < \infty$ and $d_k(x) = \infty$ for all $k \geq n+1$; furthermore, if $n \geq 2$, then $d_{n-1}(x) + 1 < d_n(x)$ (see \cite[Proposition 2.1]{Ahn23a} and \cite[pp.~23--24]{Sha86}).

In his 1986 paper, Shallit established the {\em law of large numbers (LLN)} of Pierce expansion digits \cite[Theorem~16]{Sha86} : {\em For Lebesgue-almost every $x \in [0,1]$, we have the following:}
\begin{align} \label{convergence law}
\lim_{n \to \infty} (d_n(x))^{1/n} = e.
\end{align}
One natural question that arises is to determine the Hausdorff dimension of the exceptional set where \eqref{convergence law} fails. This is because, although the LLN tells us that the exceptional set has null Lebesgue measure, it might be possible that the exceptional set is fairly large in the Hausdorff dimension sense. 

In the context of Engel expansions, which are non-alternating variant of Pierce expansions, Galambos \cite{Gal76} posed a similar question concerning the Hausdorff dimension of the set on which the LLN for the Engel expansions fails. (See \cite{ERS58} for the LLN in Engel expansions). Wu \cite{Wu00} resolved this question through an ingenious construction. Specifically, he first fixed a subset $A \subseteq (0,1]$ of positive Lebesgue measure (and hence of full Hausdorff dimension); then, he defined a sequence of maps $(g_n \colon A \to (0,1])_{n \geq M}$ for some sufficiently large $M \in \mathbb{N}$. Then, he demonstrated that the LLN fails on $g_n(A)$ and that $g_n^{-1} \colon g_n(A) \to A$ is $(1/\alpha_n)$-H{\"o}lder continuous for all $n \geq M$, where $\alpha_n \to 1^+$ as $n \to \infty$. This allowed him to conclude that the Hausdorff dimension of $g_n(A)$ is at least $\alpha_n$, thereby establishing that the exceptional set has full Hausdorff dimension.

For Pierce expansions, we recently established the following two results in \cite{Ahn24} concerning the set of exceptions to the LLN \eqref{convergence law}, the second of which serves as the main theorem of this paper.

\begin{notheorem} {\rm\cite[Corollary~1.3]{Ahn24}} 
For each $\alpha \in [1, +\infty]$, the set 
\[
\left\{ x \in [0,1] : \lim_{n \to \infty} (d_n(x))^{1/n} = \alpha \right\}
\]
has full Hausdorff dimension.
\end{notheorem}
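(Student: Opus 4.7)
The plan is to adapt the H\"older map strategy of Wu \cite{Wu00} to the Pierce setting. For each target $\alpha \in [1, +\infty]$ and each sufficiently large $N$, I would construct a map $g_N \colon A \to (0,1]$ from a reference set $A$ of full Hausdorff dimension into the level set
\[
E(\alpha) \coloneqq \left\{ x \in [0,1] : \lim_{n \to \infty} (d_n(x))^{1/n} = \alpha \right\},
\]
such that $g_N^{-1}$ is $(1/\alpha_N)$-H\"older continuous for some $\alpha_N \to 1^+$. The H\"older--dimension inequality then gives $\hdim g_N(A) \geq 1/\alpha_N$, so letting $N \to \infty$ yields $\hdim E(\alpha) \geq 1$, which combined with the trivial bound $E(\alpha) \subseteq [0,1]$ gives the result.

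For the construction of $g_N$, I would first fix a strictly increasing sequence $(a_k)$ of positive integers satisfying $a_k^{1/k} \to \alpha$ and $a_{k+1} - a_k \to \infty$: natural choices are $a_k = \lfloor \alpha^k \rfloor + k$ for $\alpha \in (1,+\infty)$, $a_k = k^N$ (with $N \to \infty$ to recover dimension $1$ in the limit) for $\alpha = 1$, and $a_k = 2^{k^2}$ for $\alpha = +\infty$. Next I would take a reference set $A \subseteq (0,1]$ of positive Lebesgue measure, hence of Hausdorff dimension $1$. Finally, for $y \in A$ with Pierce digits $(e_k(y))$, I would define $g_N(y)$ to be the point with Pierce expansion $[b_1, b_2, \dotsc]_P$, where $b_k = a_k + \sigma_k(e_k(y))$ for a suitable truncation $\sigma_k$ chosen so that $\sigma_k(e_k(y)) < a_{k+1} - a_k$. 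This guarantees simultaneously the strict monotonicity $b_{k+1} > b_k$, the growth $b_k^{1/k} \to \alpha$ (so $g_N(y) \in E(\alpha)$), and the retention of sufficient information from $(e_k(y))$ that $g_N^{-1}$ is nearly Lipschitz.

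For the quantitative estimate I would invoke the Pierce cylinder length identity $|I_n(b_1, \dotsc, b_n)| = 1/(b_1 b_2 \cdots b_n (b_n+1))$, and compare the $n$th level cylinder containing $y$ in $A$ with the cylinder containing $g_N(y)$; taking logarithms and summing yields $\alpha_N$, and by tuning the range of $\sigma_k$ relative to $a_{k+1} - a_k$ I expect $\alpha_N = 1 + O(1/N)$. The main obstacle will be balancing three competing requirements in the choice of $\sigma_k$: (i) forcing the \emph{full} limit of $(d_n)^{1/n}$ (not merely a subsequential one) to equal $\alpha$; (ii) respecting the strict inequality $d_{k+1} > d_k$ of Pierce expansions, which is sharper than the non-strict monotonicity exploited by Wu in the Engel case and forces the truncation range to be strictly less than $a_{k+1} - a_k$; and (iii) retaining enough entropy from $y$ that the H\"older exponent tends to $1$. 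This balance is most delicate when $\alpha$ is close to $1$, where $a_{k+1} - a_k$ grows only polynomially and so the admissible truncation range is tight; here the parameter $N$ in the prescribed sequence $a_k = k^N$ must be coupled to the H\"older exponent in such a way that both quantities can be pushed to their limit simultaneously.
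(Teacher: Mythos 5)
The present paper does not reprove this level-set statement; it only cites it from \cite{Ahn24} (where it is obtained via a mass-distribution argument on a symbolic space) and devotes its new elementary proof to the Main Theorem alone, whose construction lands the limit at $e^{M/(M+1)}$ for integer $M$ but does not steer it to an arbitrary prescribed $\alpha$. Your proposal therefore targets a strictly stronger statement than the one the paper actually reproves, and unfortunately the digit-by-digit H\"older-map scheme you sketch does not reach it.

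The obstruction is quantitative, not a matter of tuning $\sigma_k$. Egorov's theorem puts $A$ in the regime $2^k < e_k(y) < 4^k$ for all large $k$, whereas your target digits obey $b_k \ge a_k$ with $a_k^{1/k}\to\alpha$. The Pierce cylinder-length estimates force the best H\"older exponent of $g_N^{-1}$ to be governed by the asymptotic ratio $\sum_{k<n}\log e_k(y) \big/ \sum_{k\le n+1}\log b_k$, and you need this to approach $1$. When $\alpha>e$, the denominator grows like $\tfrac12 n^2\log\alpha$ while the numerator is of order $\tfrac12 n^2$, so the exponent is pinned below roughly $1/\log\alpha<1$ for every $N$; with $a_k=2^{k^2}$ for $\alpha=+\infty$ the ratio tends to $0$ and the dimension bound is vacuous. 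When $\alpha<e$, including $\alpha=1$, the gaps $a_{k+1}-a_k$ are eventually much smaller than the spread of $e_k(y)$ over $A$, so $\sigma_k$ must identify distinct digit values; $g_N$ then fails to be injective and $g_N^{-1}$ is not even defined. The paper's Main-Theorem proof avoids this exactly by \emph{inserting} one extra digit per block of $M$ digits and leaving the remaining digits essentially unchanged, so the surplus in $\sum\log b_k$ is only a $1/M$-fraction and the exponent tends to $1$ --- at the cost of no control over where the limit lands. Prescribing $\alpha$ while keeping the exponent near $1$ would require a reference set $A$ whose own digits grow at rate $\alpha^k$, and knowing that such a set has full Hausdorff dimension is precisely the theorem you are trying to prove.
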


\begin{maintheorem} {\rm\cite[Corollary~1.4]{Ahn24}}
The subset of $[0,1]$ on which \eqref{convergence law} fails has full Hausdorff dimension.
\end{maintheorem}

In fact, \cite{Ahn24} established even more general results. The primary tools used in our proofs are classical in fractal geometry. For instance, we defined a suitable symbolic space and applied the so-called {\em mass distribution principle} (see \cite[Example 4.6(a)]{Fal14}) to derive a lower bound for the Hausdorff dimension. In certain cases, we also obtained an upper bound using the lower box-counting dimension and the fact that the Hausdorff dimension does not exceed the lower box-counting dimension (see \cite[Propositions~3.4 and 4.1]{Fal14}). For further details, see \cite{Ahn24}.

In this paper, we provide a more elementary proof of our recent result that the Hausdorff dimension of the set where \eqref{convergence law} fails is $1$ (Main Theorem). Our approach modifies Wu’s \cite{Wu00} method, which was originally developed for Engel expansions. We consider our method elementary because it avoids advanced machinery, such as abstract measure-theoretic constructions, and instead relies on explicit sequences and constructive techniques. This contrasts with previous proofs that employed symbolic dynamics and dimension theory.

Wu's proof for Engel expansions \cite{Wu00} involved intricate mappings and H{\"o}lder continuity arguments to show that the exceptional set for the LLN has full Hausdorff dimension. While effective, his approach does not directly transfer to Pierce expansions due to differing digit constraints (see Remark~\ref{digit condition remark} for the details). Our method adapts and simplifies Wu's framework by extensively using properties unique to Pierce expansions, such as the strict increasing condition for digits, as stated in the undermentioned Proposition \ref{digit condition lemma}. This simplification contributes to the accessibility of the proof, thus offering an alternative perspective that is educationally beneficial and computationally straightforward.

Throughout this paper, we denote the Hausdorff dimension by $\hdim$ and the Lebesgue measure on $[0,1]$ by $\mathcal{L}$. The set of positive integers is denoted by $\mathbb{N}$, the set of non-negative integers by $\mathbb{N}_0$, the set of extended positive integers by $\mathbb{N}_\infty \coloneqq \mathbb{N} \cup \{ \infty \}$, and the set of irrational numbers in $[0,1]$ by $\mathbb{I}$. Following standard convention, we define $\infty \cdot \infty \coloneqq \infty$, $c \cdot \infty \coloneqq \infty$ for any $c > 0$, and ${1}/{\infty} \coloneqq 0$. For any map $g \colon X \to Y$ and a subset $A \subseteq X$, we write $g|_A$ to denote the restriction of $g$ to $A$.

\section{Preliminaries} \label{preliminaries}

In this section, we introduce some elementary facts on Pierce expansions and the Hausdorff dimension, which will be used in the proof of the Main Theorem.

We begin by listing some well-known properties of Pierce expansions.

\begin{proposition} {\rm\cite[Proposition 2.1]{Ahn23a}} \label{digit condition lemma}
For each $x \in [0,1]$, the following hold for all $n \in \mathbb{N}$:
\begin{enumerate}[label=\upshape(\roman*), ref=\roman*, leftmargin=*, widest=ii]
\item \label{digit condition lemma 1}
$d_{n+1}(x) \geq d_n(x)+1$; and
\item \label{digit condition lemma 2}
$d_n(x) \geq n$.
\end{enumerate}
\end{proposition}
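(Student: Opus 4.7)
The plan is to prove (i) directly from the dynamical definition given in \eqref{Pierce algorithm 1}--\eqref{Pierce algorithm 2}, and then obtain (ii) as a routine induction.

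For (i), the main step is to establish the following one-step bound: for any $y \in (0,1]$, the iterate $T(y)$ satisfies $T(y) < 1/(d_1(y)+1)$. This is a short computation from the definition: since $d_1(y) = \lfloor 1/y \rfloor$, we have $1/(d_1(y)+1) < y \leq 1/d_1(y)$, and substituting into $T(y) = 1 - d_1(y) y$ gives $0 \leq T(y) < 1/(d_1(y)+1)$. Applying this with $y = T^{n-1}(x)$ (and assuming both $T^{n-1}(x)$ and $T^n(x)$ are nonzero), we get $1/T^n(x) > d_n(x) + 1$, so that $d_{n+1}(x) = \lfloor 1/T^n(x) \rfloor \geq d_n(x) + 1$, which is exactly (i).

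For (ii), the base case $d_1(x) \geq 1$ is immediate: either $x = 0$ and $d_1(x) = \infty \geq 1$, or $x \in (0,1]$ and $1/x \geq 1$, so $d_1(x) = \lfloor 1/x \rfloor \geq 1$. The inductive step is a direct application of (i): if $d_n(x) \geq n$, then $d_{n+1}(x) \geq d_n(x) + 1 \geq n+1$.

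The only real obstacle is bookkeeping of the terminating case: if $T^{k-1}(x) = 0$ for some $k$, then $d_k(x) = \infty$ by definition and consequently $d_j(x) = \infty$ for all $j \geq k$. Under the conventions $c \cdot \infty = \infty$ and $1/\infty = 0$ stated in the introduction, the inequalities in (i) and (ii) hold trivially in that case, with the understanding that $\infty \geq \infty$ and $\infty \geq n$ for every finite $n$. Thus this case requires only a brief remark and does not alter the argument, which reduces entirely to the one-step dynamical bound above.
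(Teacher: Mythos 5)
Your proof is correct and takes exactly the approach the paper indicates (deriving both parts from the dynamical definitions \eqref{Pierce algorithm 1}--\eqref{Pierce algorithm 2}); you simply fill in the details that the paper defers to the reference \cite{Ahn23a}. The one-step bound $0 \le T(y) < 1/(d_1(y)+1)$, the floor inequality yielding (i), the induction giving (ii), and the treatment of the terminating case via the $\infty$ conventions are all sound.
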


\begin{proof}
Both parts follow from \eqref{Pierce algorithm 1} and \eqref{Pierce algorithm 2}. See \cite{Ahn23a} for the details.
\end{proof}

We introduce sets of sequences that form subsets of $\mathbb{N}_\infty^\mathbb{N}$ and are closely related to the Pierce expansion digit sequences. These sets were studied in detail in \cite{Ahn23a}.

Define $\Sigma_0$ as the singleton set:
\[
\Sigma_0 \coloneqq \{ (\infty, \infty, \dotsc) \}.
\]
For each $n \in \mathbb{N}$, define the following:
\[
\Sigma_n \coloneqq \{ (\sigma_k)_{k \in \mathbb{N}} \in \mathbb{N}^{\{ 1, \dotsc, n \}} \times \{ \infty \}^{\mathbb{N} \setminus \{ 1, \dotsc, n \}} : \sigma_1 < \sigma_2 < \dotsb < \sigma_n \}.
\]
Next, define the following:
\[
\Sigma_\infty \coloneqq \{ (\sigma_k)_{k \in \mathbb{N}} \in \mathbb{N}^{\mathbb{N}} : \sigma_k < \sigma_{k+1} \text{ for all } k \in \mathbb{N} \}.
\]
Finally, define the full sequence set as follows:
\[
\Sigma \coloneqq \Sigma_0 \cup \bigcup_{n \in \mathbb{N}} \Sigma_n \cup \Sigma_\infty.
\]

We say that a sequence $\sigma \coloneqq (\sigma_k)_{k \in \mathbb{N}}$ in $\mathbb{N}_\infty^{\mathbb{N}}$ is {\em Pierce realizable} if there exists $x \in [0,1]$ such that $d_k(x) = \sigma_k$ for all $k \in \mathbb{N}$. We denote the collection of all Pierce realizable sequences by $\Sigma_{\realizable}$.

\begin{proposition} {\rm\cite[Proposition 2.4]{Ahn23a}} \label{strict increasing condition}
Let $\sigma \coloneqq (\sigma_k)_{k \in \mathbb{N}} \in \mathbb{N}_\infty^{\mathbb{N}}$. Then, $\sigma \in \Sigma_{\realizable}$ if and only if one of the following holds:
\begin{enumerate}[label=\upshape(\roman*), ref=\roman*, leftmargin=*, widest=iii]
\item
$\sigma \in \Sigma_0 \cup \Sigma_1$;

\item
$\sigma \in \Sigma_n$ for some $n \geq 2$, with $\sigma_{n-1} + 1 < \sigma_n$; and

\item
$\sigma \in \Sigma_\infty$.
\end{enumerate}
\end{proposition}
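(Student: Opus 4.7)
The plan is to prove the two implications of the equivalence separately.

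The ``only if'' direction should fall out of Proposition~\ref{digit condition lemma} combined with the termination fact quoted in the introduction. Given $x \in [0,1]$ with Pierce digit sequence $\sigma = (d_k(x))_{k \in \mathbb{N}}$, I would split into three exhaustive cases based on the orbit of $x$ under $T$: if $x = 0$, then $\sigma = (\infty,\infty,\dotsc) \in \Sigma_0$; if there exists a least $n \in \mathbb{N}$ with $T^n(x) = 0$, then $\sigma \in \Sigma_n$, and the additional inequality $\sigma_{n-1}+1 < \sigma_n$ (when $n \geq 2$) is exactly what the introduction records; finally, if $T^j(x) \neq 0$ for every $j \geq 0$, then by Proposition~\ref{digit condition lemma}\ref{digit condition lemma 1} every $d_k(x)$ is a finite, strictly increasing positive integer, so $\sigma \in \Sigma_\infty$.

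For the ``if'' direction, given $\sigma$ in any of the three families I intend to take the canonical candidate
\[
x \coloneqq \sum_{j \geq 1}(-1)^{j+1} \prod_{k=1}^{j} \frac{1}{\sigma_k},
\]
which is a finite sum in the terminating cases (trivially $x = 0$ when $\sigma \in \Sigma_0$) and an alternating series with strictly decreasing positive terms otherwise, hence convergent. The goal is then to verify $d_k(x) = \sigma_k$ for every $k$, which I would do by induction on $k$: once the base case $d_1(x) = \sigma_1$ is in hand, a direct calculation from \eqref{Pierce algorithm 1} shows that $T(x)$ is the analogous series built from the shifted sequence $(\sigma_2, \sigma_3, \dotsc)$, which belongs to the same family as $\sigma$, so the inductive step reduces to another instance of the base case.

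The base case amounts to showing $\frac{1}{\sigma_1+1} < x \leq \frac{1}{\sigma_1}$. Writing $x = (1-z)/\sigma_1$, where $z$ is itself an alternating series of the same shape built from $(\sigma_2,\sigma_3,\dotsc)$, the upper bound is immediate from $z \geq 0$, and the lower bound rearranges to $z < 1/(\sigma_1+1)$. Since the terms of $z$ are positive and strictly decreasing, one has $z \leq 1/\sigma_2$, while Proposition~\ref{digit condition lemma}\ref{digit condition lemma 1} yields $\sigma_2 \geq \sigma_1+1$. I expect the only real obstacle to lie in extracting the \emph{strict} inequality $z < 1/(\sigma_1+1)$ in the borderline situation $\sigma_2 = \sigma_1+1$: this requires either the existence of an additional negative term in the series -- available whenever $\sigma \in \Sigma_n$ with $n \geq 3$ or $\sigma \in \Sigma_\infty$ -- or the hypothesized explicit gap $\sigma_2 > \sigma_1+1$, which is precisely the content of case (ii) when $n = 2$. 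With this boundary case dispatched, everything else is a routine algebraic check.
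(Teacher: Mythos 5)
The paper does not reproduce a proof of this proposition; it is cited directly from \cite{Ahn23a}, so there is no in-text argument to compare against. That said, your proposal is a correct and natural direct verification of the equivalence. The one genuine slip is a mis-citation: to obtain $\sigma_2 \geq \sigma_1 + 1$ in the ``if'' direction you appeal to Proposition~\ref{digit condition lemma}(\ref{digit condition lemma 1}), but that statement concerns the digit sequence of a point $x \in [0,1]$, which is exactly what you are trying to show $\sigma$ is, so invoking it there is circular; the inequality you actually need is already built into the definitions of $\Sigma_n$ and $\Sigma_\infty$ via their strict-increase requirement. Beyond that, you should state explicitly that the shift $(\sigma_2, \sigma_3, \dotsc)$ again lies in one of the families (i)--(iii) --- the terminal gap $\sigma_{n-1}+1 < \sigma_n$ is carried along verbatim when the shifted sequence stays in some $\Sigma_m$ with $m \geq 2$, and becomes vacuous when it falls into $\Sigma_1$ or $\Sigma_0$ --- since that invariance is what allows the induction to close. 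Your identification of the borderline case $\sigma_2 = \sigma_1 + 1$, together with the two distinct mechanisms that still force $z < 1/(\sigma_1+1)$ (either $\sigma_3 < \infty$ so the alternating series has a further nonzero term, or the case-(ii) gap when $n = 2$), is precisely where the content of the proposition lives, and you have handled it correctly.
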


Define $f \colon [0,1] \to \Sigma$ by 
\[
f(x) \coloneqq (d_k(x))_{k \in \mathbb{N}}
\]
for each $x \in [0,1]$. Define $\varphi \colon \Sigma \to [0,1]$ by
\begin{align*} 
\varphi (\sigma) \coloneqq \sum_{j=1}^\infty \left( (-1)^{j+1} \prod_{k=1}^j \frac{1}{\sigma_k} \right) = \frac{1}{\sigma_1} - \frac{1}{\sigma_1 \sigma_2} + \dotsb + \frac{(-1)^{n+1}}{\sigma_1 \dotsm \sigma_n} + \dotsb
\end{align*}
for each $\sigma \coloneqq (\sigma_k)_{k \in \mathbb{N}} \in \Sigma$. 

\begin{proposition} {\rm (See \cite[Section 3.2]{Ahn23a})} \label{identity lemma}
We have $\varphi \circ f = \identity_{[0,1]}$ and $f \circ (\varphi|_{\Sigma_{\realizable}}) = \identity_{\Sigma_{\realizable}}$. In particular, $f|_{\mathbb{I}} \colon \mathbb{I} \to \Sigma_\infty$ and $\varphi|_{\Sigma_\infty} \colon \Sigma_\infty \to \mathbb{I}$ are inverses to each other.
\end{proposition}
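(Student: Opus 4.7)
The plan is to establish the three assertions in the stated order, with each one feeding into the next. For $\varphi \circ f = \identity_{[0,1]}$, I would exploit the dynamical relation arising from \eqref{Pierce algorithm 1}. When $x \neq 0$, rearranging $T(x) = 1 - d_1(x)\,x$ gives $x = 1/d_1(x) - T(x)/d_1(x)$; applying the same identity to $T(x), T^2(x), \dotsc$ and iterating yields, for each $n \in \mathbb{N}$,
\[
x \;=\; \sum_{j=1}^{n} \frac{(-1)^{j+1}}{\prod_{k=1}^{j} d_k(x)} \;+\; \frac{(-1)^{n}\,T^n(x)}{\prod_{k=1}^{n} d_k(x)}.
\]
Because $T^n(x) \in [0,1]$ and $d_k(x) \geq k$ by Proposition~\ref{digit condition lemma}\,(\ref{digit condition lemma 2}), the remainder is bounded in absolute value by $1/n! \to 0$, so $\varphi(f(x)) = x$. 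The case $x = 0$ is immediate from the conventions $d_k(0) = \infty$ and $1/\infty = 0$.

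The second identity $f \circ \varphi|_{\Sigma_{\realizable}} = \identity_{\Sigma_{\realizable}}$ then follows in a single line: given $\sigma \in \Sigma_{\realizable}$, by definition there exists $x \in [0,1]$ with $f(x) = \sigma$, so by the first part $\varphi(\sigma) = \varphi(f(x)) = x$, whence $f(\varphi(\sigma)) = f(x) = \sigma$. For the ``in particular'' claim, I would first verify $f(\mathbb{I}) \subseteq \Sigma_\infty$: if $f(x) \in \Sigma_0 \cup \bigcup_{n \in \mathbb{N}} \Sigma_n$ for some $x \in \mathbb{I}$, the first part would write $x = \varphi(f(x))$ as a finite rational sum, contradicting $x \in \mathbb{I}$. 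Conversely, Proposition~\ref{strict increasing condition} gives $\Sigma_\infty \subseteq \Sigma_{\realizable}$, so the second identity yields $f(\varphi(\sigma)) = \sigma$ for any $\sigma \in \Sigma_\infty$; in particular, the digit sequence of $\varphi(\sigma)$ contains no $\infty$, meaning the Pierce algorithm never produces $T^k(\varphi(\sigma)) = 0$. A short descent lemma --- that applying $T$ to any rational $p/q \in [0,1]$ strictly decreases the reduced numerator and therefore drives it to $0$ in finitely many iterations --- then forces $\varphi(\sigma)$ to be irrational. Combined with the first two parts, this makes $f|_{\mathbb{I}}$ and $\varphi|_{\Sigma_\infty}$ mutual inverses.

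The only genuinely computational step, and therefore the main obstacle I anticipate, is the telescoping argument and remainder estimate in the first part. Once the series expansion is seen to converge to $x$, the remaining assertions reduce to short deductions from Propositions~\ref{digit condition lemma} and~\ref{strict increasing condition}, together with the elementary descent fact about the $T$-orbit of a rational.
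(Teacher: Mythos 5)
Your proof is correct. The paper itself does not supply an argument for this proposition but simply defers to \cite[Section~3.2]{Ahn23a}, so there is no in-text proof to compare against; your reconstruction is the natural one. The telescoping identity $x = \sum_{j=1}^{n}(-1)^{j+1}\prod_{k=1}^{j}d_k(x)^{-1} + (-1)^{n}T^{n}(x)\prod_{k=1}^{n}d_k(x)^{-1}$ with the remainder bound $\leq 1/n!$ (via Proposition~\ref{digit condition lemma}(\ref{digit condition lemma 2})) handles the irrational case, and the conventions $1/\infty = 0$ make it degenerate correctly to the finite sum when some $T^{m}(x)=0$, so $\varphi\circ f=\identity_{[0,1]}$ holds across all of $[0,1]$; the second identity is then formal, and the irrationality of $\varphi(\sigma)$ for $\sigma\in\Sigma_\infty$ does indeed rest on the numerator-descent fact you invoke (writing $x=p/q$ in lowest terms, $T(x)=r/q$ with $r=q-\lfloor q/p\rfloor p<p$, so the reduced numerator strictly drops and the orbit reaches $0$ in at most $p$ steps).
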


\begin{remark} \label{remark for rationals}
In view of Proposition \ref{identity lemma}, we remark that for every rational number $x \in [0,1]$, there exists $n = n(x) \in \mathbb{N}$ such that $d_k(x) = \infty$ for all $k \geq n$. Consequently, we have $(d_k(x))^{1/k} = \infty$ for all $k \geq n$, which is consistent with the convention $\infty^k = \infty$ for any $k \in \mathbb{N}$. This tells us that the convergence in \eqref{convergence law} does not hold for any rational number.
\end{remark}

Let $n \in \mathbb{N}$ and $\sigma \coloneqq (\sigma_k)_{k \in \mathbb{N}} \in \Sigma_n$. Define the {\em fundamental interval} associated with $\sigma$ by the following:
\[
I (\sigma) \coloneqq \{  x \in [0,1] : d_k(x) = \sigma_k \text{ for all } 1 \leq k \leq n \}.
\]
Next, we define a sequence $\widehat{\sigma} \coloneqq (\widehat{\sigma}_k)_{k \in \mathbb{N}} \in \Sigma_n$ by setting $\widehat{\sigma}_n \coloneqq \sigma_n + 1$ and $\widehat{\sigma}_k \coloneqq \sigma_k$ for $k \in \mathbb{N} \setminus \{ n \}$, i.e.,
\[
\widehat{\sigma} = (\sigma_1, \dotsc, \sigma_{n-1}, \sigma_n+1, \infty, \infty, \dotsc).
\]

\begin{proposition} {\rm(See \cite[Theorem 1]{Sha86})} and {\cite[Proposition 3.5]{Ahn23a}} \label{I sigma}
For each $n \in \mathbb{N}$ and $\sigma \in \Sigma_n$, we have
\[
I (\sigma) = \begin{cases}
(\varphi (\widehat{\sigma}), \varphi (\sigma)], &\text{if $n$ is odd;} \\
[\varphi (\sigma), \varphi (\widehat{\sigma})), &\text{if $n$ is even,}
\end{cases}
\quad \text{or} \quad
I (\sigma) = \begin{cases}
(\varphi (\widehat{\sigma}), \varphi (\sigma)), &\text{if $n$ is odd;} \\
(\varphi (\sigma), \varphi (\widehat{\sigma})), &\text{if $n$ is even,}
\end{cases}
\]
according as $\sigma \in \Sigma_{\realizable}$ or $\sigma \not \in \Sigma_{\realizable}$.
\end{proposition}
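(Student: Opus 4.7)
The plan is to iterate the Pierce map $T$ and parametrize $I(\sigma)$ by $t = T^n(x)$. Unrolling the recursion $T^{k-1}(x) = (1 - T^k(x))/\sigma_k$, which is valid for $1 \le k \le n$ whenever $d_k(x) = \sigma_k$, I would first prove by a short induction on $n$ the affine identity
\[
x = \varphi(\sigma) + \frac{(-1)^n}{\sigma_1 \cdots \sigma_n}\, T^n(x), \qquad x \in I(\sigma).
\]
Setting $\Phi(t) \coloneqq \varphi(\sigma) + (-1)^n t / (\sigma_1 \cdots \sigma_n)$, a direct computation from the defining series of $\varphi$ yields $\Phi(1/(\sigma_n+1)) = \varphi(\widehat\sigma)$. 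Hence $I(\sigma)$ is the $\Phi$-image of the admissible range of $t$, and its two endpoints are forced to be $\varphi(\sigma)$ and $\varphi(\widehat\sigma)$.

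Next, I would identify the admissible range of $t$. Each constraint $d_k(x) = \sigma_k$, namely $T^{k-1}(x) \in (1/(\sigma_k+1), 1/\sigma_k]$, translates via $T^k(x) = 1 - \sigma_k T^{k-1}(x)$ into the equivalent bound $T^k(x) \in [0, 1/(\sigma_k+1))$. Chaining these across all $k$, one checks that for any $t \in (0, 1/(\sigma_n+1))$ every backward iterate $T^k(\Phi(t))$ automatically satisfies its required strict inequality, so $\Phi$ carries $(0, 1/(\sigma_n+1))$ into $I(\sigma)$. The delicate case is $t = 0$: the backward iterate at level $n-1$ becomes $T^{n-1}(\Phi(0)) = 1/\sigma_n$, which lies in $(1/(\sigma_n+1), 1/\sigma_n]$ as required, but to be simultaneously compatible with $d_{n-1}(x) = \sigma_{n-1}$ one needs the strict inequality $1/\sigma_n < 1/(\sigma_{n-1}+1)$, that is, $\sigma_{n-1} + 1 < \sigma_n$. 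By Proposition \ref{strict increasing condition}, this is precisely the realizability condition (for $n \ge 2$; for $n = 1$ no such constraint arises and every $\sigma \in \Sigma_1$ is realizable). Consequently $\varphi(\sigma) = \Phi(0) \in I(\sigma)$ if and only if $\sigma \in \Sigma_{\realizable}$, while $\varphi(\widehat\sigma)$, corresponding to the excluded endpoint $t = 1/(\sigma_n+1)$, is never in $I(\sigma)$.

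Combining these observations with the monotonicity of $\Phi$---increasing when $n$ is even, decreasing when $n$ is odd, via the sign $(-1)^n$---produces all four cases in the statement: the bracket at $\varphi(\sigma)$ is closed exactly when $\sigma$ is realizable, the bracket at $\varphi(\widehat\sigma)$ is always open, and the orientation follows from the parity of $n$. The main obstacle is the endpoint bookkeeping at $t = 0$ in the non-realizable case $\sigma_{n-1} + 1 = \sigma_n$: one must verify that $\varphi(\sigma)$ truly falls outside $I(\sigma)$ (a short calculation gives $d_{n-1}(\varphi(\sigma)) = \sigma_{n-1} + 1 \neq \sigma_{n-1}$), while also confirming that no analogous collapse happens at intermediate levels $k < n-1$---there the strict inequality $T^k(\Phi(t)) < 1/(\sigma_k + 1)$ is preserved automatically for every $t > 0$, regardless of whether $\sigma_{k+1}$ equals $\sigma_k + 1$ or exceeds it.
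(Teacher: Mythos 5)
The paper offers no in-line proof of this proposition; it is stated by reference to \cite[Theorem 1]{Sha86} and \cite[Proposition 3.5]{Ahn23a}. So there is no paper argument to compare against, and I can only assess your self-contained proof on its own terms. It is correct, and it is the natural argument: the cylinder $I(\sigma)$ is exactly the image of a parameter interval under the affine conjugacy $\Phi(t) = \varphi(\sigma) + (-1)^n t/(\sigma_1\cdots\sigma_n)$, with orientation governed by the sign $(-1)^n$, and the endpoint at $t=0$ included or excluded according to realizability.

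Two small points worth making explicit. First, after proving the identity $x = \Phi(T^n(x))$ on $I(\sigma)$, you should note that $d_n(x)=\sigma_n$ forces $T^n(x)\in[0,1/(\sigma_n+1))$; this gives the inclusion $I(\sigma)\subseteq\Phi\bigl([0,1/(\sigma_n+1))\bigr)$, which is the direction complementary to your chain argument and is needed before you can assert that $I(\sigma)$ equals the $\Phi$-image of the admissible $t$-range. Second, in the reverse-induction chain it is cleaner to prove the strict statement $y_k\in(1/(\sigma_{k+1}+1),\,1/\sigma_{k+1})$ for $t>0$ and $0\le k\le n-1$, from which $\sigma_k<\sigma_{k+1}$ immediately yields $y_k<1/(\sigma_k+1)$ and closes the induction; your phrase ``automatically satisfies its required strict inequality'' is compressing exactly this. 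Your handling of $t=0$ (the value $y_{n-1}=1/\sigma_n$ against the threshold $1/(\sigma_{n-1}+1)$, i.e.\ the condition $\sigma_{n-1}+1<\sigma_n$ from Proposition~\ref{strict increasing condition}), and the telescoping identification $\varphi(\sigma)=\varphi(\sigma_1,\dots,\sigma_{n-2},\sigma_{n-1}+1,\infty,\dots)$ when $\sigma_n=\sigma_{n-1}+1$, from which $d_{n-1}(\varphi(\sigma))=\sigma_{n-1}+1$ via Proposition~\ref{identity lemma}, are both correct and capture the realizability dichotomy precisely.
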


Let $\sigma \coloneqq (\sigma_k)_{k \in \mathbb{N}} \in \Sigma$. Define $\sigma^{(0)} \coloneqq (\infty, \infty, \dotsc)$. For each $n \in \mathbb{N}$, we define $\sigma^{(n)} \coloneqq (\tau_k)_{k \in \mathbb{N}} \in \Sigma$ by setting $\tau_k \coloneqq \sigma_k$ for $1 \leq k \leq n$ and $\tau_k \coloneqq \infty$ for $k \geq n+1$, i.e.,
\[
\sigma^{(n)} = (\sigma_1, \dotsc, \sigma_n, \infty, \infty, \dotsc).
\]

\begin{proposition} {\rm\cite[Lemma 3.29]{Ahn23a}} \label{distance between sigma and sigma n}
Let $\sigma \coloneqq (\sigma_k)_{k \in \mathbb{N}} \in \Sigma$. For each $n \in \mathbb{N}$, we have the following:
\[
| \varphi (\sigma) - \varphi (\sigma^{(n)}) | \leq \prod_{k=1}^{n+1} \frac{1}{\sigma_k}.
\]
\end{proposition}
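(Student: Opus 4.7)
The plan is to recognize $\varphi(\sigma) - \varphi(\sigma^{(n)})$ as the tail, starting at $j = n+1$, of the alternating series defining $\varphi(\sigma)$, and then invoke the classical alternating-series estimation theorem. Using the convention $1/\infty = 0$, the sum defining $\varphi(\sigma^{(n)})$ collapses to the finite partial sum $\sum_{j=1}^{n} (-1)^{j+1}\prod_{k=1}^{j} \sigma_k^{-1}$, regardless of whether $\sigma \in \Sigma_0$, $\sigma \in \Sigma_m$ for some finite $m$, or $\sigma \in \Sigma_\infty$. Consequently
\[
\varphi(\sigma) - \varphi(\sigma^{(n)}) = \sum_{j=n+1}^{\infty} (-1)^{j+1} a_j, \qquad a_j \coloneqq \prod_{k=1}^{j} \frac{1}{\sigma_k},
\]
with the understanding that any term containing a factor $1/\infty$ is interpreted as $0$.

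To apply the alternating-series estimate, I need $(a_j)$ to be non-increasing on the indices where $a_j > 0$ and, in the $\Sigma_\infty$ case, to converge to $0$. Monotonicity follows from the ratio $a_{j+1}/a_j = 1/\sigma_{j+1}$ together with the fact that any finite $\sigma_{j+1}$ satisfies $\sigma_{j+1} \geq j+1 \geq 2$, which is a direct consequence of the strict-increase condition built into the definition of $\Sigma$ (cf.\ also Proposition~\ref{digit condition lemma}). Convergence $a_j \to 0$ in the $\Sigma_\infty$ case follows at once from the coarse bound $a_j \leq 1/j!$. The alternating-series estimate then yields
\[
\left| \varphi(\sigma) - \varphi(\sigma^{(n)}) \right| \leq a_{n+1} = \prod_{k=1}^{n+1}\frac{1}{\sigma_k},
\]
which is exactly the desired inequality.

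The degenerate cases $\sigma \in \Sigma_0$ and $\sigma \in \Sigma_m$ with $m \leq n$ require only a brief comment: in these situations $\sigma = \sigma^{(n)}$, so the left-hand side vanishes, and simultaneously $\sigma_{n+1} = \infty$, so the right-hand side is also $0$, giving equality. I do not anticipate a substantive obstacle here; the only subtlety worth flagging is the bookkeeping with the conventions $1/\infty = 0$ and $c \cdot \infty = \infty$ needed to justify the term-by-term collapse of the defining series for $\varphi(\sigma^{(n)})$ into a finite partial sum of the series for $\varphi(\sigma)$.
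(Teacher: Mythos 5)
The paper does not actually prove this proposition; it simply cites it from \cite[Lemma~3.29]{Ahn23a}, so there is no in-text argument to compare against. That said, your proof is correct. The identification of $\varphi(\sigma^{(n)})$ with the $n$th partial sum of the series for $\varphi(\sigma)$ (using $1/\infty=0$), the verification that $a_{j+1}/a_j = 1/\sigma_{j+1} \le 1/2$ whenever $\sigma_{j+1}$ is finite --- which, as you note, follows from the strict-increase condition in the definition of $\Sigma$ giving $\sigma_k \ge k$ --- and the observation that $a_j \le 1/j!$ forces $a_j \to 0$, together legitimize the Leibniz alternating-series remainder bound $\bigl|\sum_{j \ge n+1}(-1)^{j+1}a_j\bigr| \le a_{n+1}$. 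Your handling of the degenerate cases $\sigma \in \Sigma_0 \cup \Sigma_m$ with $m \le n$ (both sides vanish) and the case $\sigma \in \Sigma_m$ with $m > n$ (finite alternating sum, same estimate) is also sound; the key observation that the set $\{j : a_j > 0\}$ is always an initial segment, so $(a_j)$ is genuinely non-increasing across the transition to zero terms, is implicit in your ``on the indices where $a_j>0$'' qualifier and is worth stating explicitly. This is essentially the canonical proof for bounds of this type, and it is exactly what one would expect the cited lemma in \cite{Ahn23a} to do.
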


The following proposition will be crucial in establishing a lower bound for the Hausdorff dimension.

\begin{proposition} {\rm\cite[Proposition 3.3(a)]{Fal14}} \label{Holder dimension}
Let $F \subseteq \mathbb{R}$, and suppose that $g \colon F \to \mathbb{R}$ is $\alpha$-H{\"o}lder continuous, i.e., there exist constants $\alpha>0$ and $c>0$ such that
\[
|g (x) - g (y)| \leq c |x-y|^\alpha
\]
for all $x, y \in F$. Then, $\hdim g (F) \leq (1/\alpha) \hdim F$.
\end{proposition}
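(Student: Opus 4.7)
The plan is to argue directly from the definition of Hausdorff measure: I will show that $\mathcal{H}^s(F) = 0$ implies $\mathcal{H}^{s/\alpha}(g(F)) = 0$, and then let $s$ decrease to $\hdim F$.

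First, I would fix any $s > \hdim F$, so that $\mathcal{H}^s(F) = 0$. Given $\delta > 0$ and $\varepsilon > 0$, I would select a countable cover $\{U_i\}_{i \in \mathbb{N}}$ of $F$ with $\diam U_i \leq \delta$ and $\sum_i (\diam U_i)^s < \varepsilon$, which exists by the definition of $\mathcal{H}^s_\delta$. Intersecting each $U_i$ with $F$ if necessary, the family $\{g(U_i \cap F)\}_{i \in \mathbb{N}}$ covers $g(F)$, and the H\"older condition yields $\diam g(U_i \cap F) \leq c (\diam(U_i \cap F))^\alpha \leq c (\diam U_i)^\alpha \leq c \delta^\alpha$, which shrinks to $0$ as $\delta \to 0^+$ and is therefore legitimate for computing $\mathcal{H}^{s/\alpha}_{c\delta^\alpha}(g(F))$. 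Raising to the power $s/\alpha$ and summing gives $\sum_i (\diam g(U_i \cap F))^{s/\alpha} \leq c^{s/\alpha} \sum_i (\diam U_i)^s < c^{s/\alpha} \varepsilon$. Letting $\varepsilon \to 0^+$ and then $\delta \to 0^+$ forces $\mathcal{H}^{s/\alpha}(g(F)) = 0$, hence $\hdim g(F) \leq s/\alpha$. Taking the infimum over $s > \hdim F$ produces the desired bound $\hdim g(F) \leq (1/\alpha) \hdim F$.

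I do not anticipate any serious obstacle; this is the standard scaling computation for Hausdorff measure under H\"older maps. The only subtle point is that a cover of $F$ need not consist of subsets of $F$, so one must apply $g$ to $U_i \cap F$ rather than $U_i$ itself, but since $\diam(U_i \cap F) \leq \diam U_i$, the estimate is unaffected. Everything else is a routine consequence of the H\"older bound and the monotonicity of $\mathcal{H}^t_\delta$ in $\delta$.
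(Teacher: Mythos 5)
Your proof is correct, and the paper itself gives no proof of this proposition---it simply cites Falconer \cite[Proposition 3.3(a)]{Fal14}. The argument you present is precisely the standard one found in Falconer: take a $\delta$-cover of $F$ witnessing $\mathcal{H}^s_\delta(F)<\varepsilon$, push it forward by $g$, use the H\"older bound to control the diameters of the images, raise to the power $s/\alpha$, and conclude that $\mathcal{H}^{s/\alpha}(g(F))=0$ for every $s>\hdim F$. Your remark about replacing $U_i$ by $U_i\cap F$ before applying $g$ (since the H\"older estimate is only available on $F$) is exactly the right care to take, and the order of limits---first $\varepsilon\to 0^+$ to kill $\mathcal{H}^{s/\alpha}_{c\delta^\alpha}(g(F))$, then $\delta\to 0^+$---is correct. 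No gaps.
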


\section{Proof of the result} \label{proof of the result}

It is worth reiterating that our proof idea is inspired by \cite{Wu00}, in which the exceptional set of the LLN in Engel expansions was discussed.

\begin{proof}[Proof of Main Theorem]
Put 
\[
E \coloneqq \{ x \in [0,1] : (d_n(x))^{1/n} \not \to e \text{ as } n \to \infty \} = \{ x \in [0,1] : \text{\eqref{convergence law} fails} \}. 
\]
The inequality $\hdim E \leq 1$ is obvious; therefore, it suffices to prove the reverse inequality. Our aim is to find a sequence of subsets of $E$ such that the Hausdorff dimensions of the subsets can be made arbitrarily close to $1$.

Recall that the LLN states that the limit in \eqref{convergence law} holds $\mathcal{L}$-almost everywhere in $[0,1]$. By Egorov's theorem, there exists a Lebesgue measurable subset $A \subseteq [0,1]$ with $\mathcal{L} (A) > 0$ such that $(d_n (x))^{1/n} \to e$ as $n \to \infty$ uniformly on $A$. As noted in Remark \ref{remark for rationals}, we have $A \subseteq \mathbb{I}$, so that $f(A) \subseteq f(\mathbb{I}) = \Sigma_\infty$, where the equality follows from Proposition \ref{identity lemma}. Since $2<e<4$, by the uniform convergence, we can find an $N \in \mathbb{N}$ such that
\begin{align} \label{bounds for dn}
2^n < d_n (x) < 4^n \quad \text{for all $n \geq N$ and for all $x \in A$}.
\end{align}

\begin{claim} \label{dimension of A} 
We have $\hdim A = 1$.
\end{claim}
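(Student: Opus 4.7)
The plan is to verify $\hdim A = 1$ by checking both inequalities. The upper bound $\hdim A \leq 1$ is immediate from $A \subseteq [0,1] \subseteq \mathbb{R}$. For the lower bound, I would exploit the classical observation that any subset of $\mathbb{R}$ of positive Lebesgue measure has Hausdorff dimension at least $1$, which is a direct consequence of the relationship between the Lebesgue measure and the $1$-dimensional Hausdorff measure on $\mathbb{R}$.

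Concretely, I would fix $\delta > 0$ and let $\{ U_i \}_{i \in \mathbb{N}}$ be any cover of $A$ by sets with $\diam (U_i) < \delta$ for each $i$. Since every $U_i \subseteq \mathbb{R}$ is contained in a closed interval $J_i$ of length $\diam (U_i)$, countable subadditivity of the Lebesgue measure yields
\[
\mathcal{L} (A) \leq \sum_{i \in \mathbb{N}} \mathcal{L} (J_i) = \sum_{i \in \mathbb{N}} \diam (U_i).
\]
Taking the infimum over all such covers shows that the $\delta$-truncated $1$-dimensional Hausdorff measure of $A$ is at least $\mathcal{L} (A)$. Letting $\delta \to 0^+$ then gives $\mathcal{H}^1 (A) \geq \mathcal{L} (A) > 0$, which forces $\hdim A \geq 1$ by definition of the Hausdorff dimension.

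I do not anticipate a significant obstacle in this claim: it is essentially the textbook-level fact (see, e.g., \cite{Fal14}) that positive Lebesgue measure on $\mathbb{R}$ forces full Hausdorff dimension. The argument relies only on the definitions of the Lebesgue and Hausdorff measures together with countable subadditivity, which is in keeping with the elementary spirit of the paper. Combining both inequalities yields $\hdim A = 1$.
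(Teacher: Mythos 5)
Your proof is correct and follows the same route as the paper, which simply cites the standard fact (Falconer, p.~45) that a subset of $[0,1]$ of positive Lebesgue measure has full Hausdorff dimension; you have merely unpacked the one-line proof of that fact via covers and countable subadditivity. There is no substantive difference in approach.
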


\begin{proof}[Proof of Claim \ref{dimension of A}] \renewcommand\qedsymbol{$\blacksquare$}
The claim is clear since any subset of positive Lebesgue measure in $[0,1]$ has full Hausdorff dimension (see \cite[p.~45]{Fal14}).
\end{proof}

Let $M \in \mathbb{N}$ be arbitrary such that $M > N$ (so that, in particular, $M \geq 2$). For each $\sigma \coloneqq (\sigma_k)_{k \in \mathbb{N}} \in \Sigma_\infty$, define $\overline{\sigma} \coloneqq (\overline{\sigma}_n)_{n \in \mathbb{N}}$ by $\overline{\sigma}_{j(M+1)+l} \coloneqq \sigma_{jM+l}+ j$ for each $j \in \mathbb{N}_0$ and $l \in \{ 0, 1, \dotsc, M \}$, except when $(j,l) = (0,0)$. By the Euclidean algorithm, for each $n \in \mathbb{N}$, we can uniquely express $n = j(M+1)+l$ for some $j \in \mathbb{N}_0$ and $l \in \{ 0, 1, \dotsc, M \}$. Thus, the sequence $(\overline{\sigma}_n)_{n \in \mathbb{N}}$ is well-defined. We denote the map $\sigma \mapsto \overline{\sigma}$ on $\Sigma_\infty$ by $\Psi$.

\begin{claim} \label{definition of Psi}
We have $\Psi (\Sigma_\infty) \subseteq \Sigma_\infty$.
\end{claim}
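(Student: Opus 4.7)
The goal is to verify two things about $\overline{\sigma} = \Psi(\sigma)$ for an arbitrary $\sigma \in \Sigma_\infty$: that each entry $\overline{\sigma}_n$ is a positive integer, and that $\overline{\sigma}_n < \overline{\sigma}_{n+1}$ for every $n \in \mathbb{N}$. The first point is immediate from the definition $\overline{\sigma}_{j(M+1)+l} = \sigma_{jM+l} + j$, since $\sigma_{jM+l} \in \mathbb{N}$ and $j \in \mathbb{N}_0$. The work is entirely in the strict monotonicity, which I plan to handle by a straightforward case split on the Euclidean-division coordinates of $n$.

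The plan is to fix $n \in \mathbb{N}$ and write $n = j(M+1) + l$ with $j \in \mathbb{N}_0$, $l \in \{0,1,\dotsc,M\}$, $(j,l) \neq (0,0)$. There are then exactly two cases for the coordinates of $n+1$:

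\begin{enumerate}[label=\upshape(\alph*), leftmargin=*]
\item If $l < M$, then $n+1 = j(M+1) + (l+1)$, and so
\[
\overline{\sigma}_{n+1} - \overline{\sigma}_n = (\sigma_{jM+l+1} + j) - (\sigma_{jM+l} + j) = \sigma_{jM+l+1} - \sigma_{jM+l} \geq 1,
\]
because $\sigma \in \Sigma_\infty$ is strictly increasing.
\item If $l = M$, then $n+1 = (j+1)(M+1)$, which corresponds to the pair $(j+1, 0)$, and both entries refer to the same $\sigma$-index $(j+1)M$:
\[
\overline{\sigma}_{n+1} - \overline{\sigma}_n = (\sigma_{(j+1)M} + (j+1)) - (\sigma_{(j+1)M} + j) = 1.
\]
\end{enumerate}

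In either case $\overline{\sigma}_{n+1} > \overline{\sigma}_n$, so $\overline{\sigma} \in \Sigma_\infty$, completing the proof. I don't anticipate a real obstacle here; the only subtle point is the ``gluing'' at the boundary $l = M \to l' = 0$, where the inequality is no longer inherited from $\sigma$ itself but rather is generated precisely by the $+j \to +(j+1)$ increment built into the definition of $\Psi$. This is exactly the purpose of subtracting the $(j,l)=(0,0)$ case (which would correspond to $n=0$) and of choosing the shift to depend on the block index $j$: the construction is designed so that both ``within-block'' and ``between-block'' monotonicity come out for free.
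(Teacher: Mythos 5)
Your proof is correct and follows essentially the same route as the paper: the same case split on whether $l < M$ (within-block, inherited from $\sigma$'s strict monotonicity) or $l = M$ (between-block, forced by the $+j \to +(j+1)$ shift), and the same index computations. The only difference is cosmetic --- you also spell out that the entries are positive integers, which the paper leaves implicit.
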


\begin{proof}[Proof of Claim \ref{definition of Psi}] \renewcommand\qedsymbol{$\blacksquare$}
Let $\sigma \coloneqq (\sigma_k)_{k \in \mathbb{N}} \in \Sigma_\infty$. Put $\overline{\sigma} \coloneqq \Psi (\sigma)$, and write $\overline{\sigma} = (\overline{\sigma}_n)_{n \in \mathbb{N}}$. We need to show that $\overline{\sigma}_n < \overline{\sigma}_{n+1}$ for all $n \in \mathbb{N}$. For any $n \in \mathbb{N}$, it is clear that $p(M+1) \leq n \leq (p+1)(M+1)-1 = p(M+1) + M$ for some $p \in \mathbb{N}_0$. 

Fix $n \in \mathbb{N}$. First, assume that $p(M+1) \leq n < p(M+1) + M$ for some $p \in \mathbb{N}_0$. Then, $n = p(M+1)+l$ for some $l \in \{ 0, 1, \dotsc, M-1 \}$, and $n+1 = p(M+1) + (l+1)$ with $l+1 \in \{ 1, 2, \dotsc, M \}$. It follows that
\[
\overline{\sigma}_n = \sigma_{pM+l} + p < \sigma_{pM+(l+1)} + p = \overline{\sigma}_{n+1}.
\]
Now, assume $n = p(M+1) + M = (p+1)M + p$ for some $p \in \mathbb{N}_0$. Then, $n+1 = (p+1)(M+1)+0$, and we have the following:
\[
\overline{\sigma}_n = \sigma_{(p+1)M} + p < \sigma_{(p+1)M} + (p+1) = \overline{\sigma}_{n+1}.
\]
Thus, $\Psi (\sigma) = \overline{\sigma}$ is indeed in $\Sigma_\infty$, and this completes the proof of the claim. 
\end{proof}

Since $\Psi (f( \mathbb{I})) \subseteq f(\mathbb{I})$ by Proposition \ref{identity lemma} and Claim \ref{definition of Psi}, for any $x \in \mathbb{I}$, we infer that $(\Psi \circ f)(x) = f(\overline{x})$ for some $\overline{x} \in \mathbb{I}$, in which case the Pierce expansion digit sequence of $\overline{x}$ equals $(\Psi \circ f)(x)$. Note that $\overline{x} = (\varphi \circ f)(\overline{x}) = (\varphi \circ \Psi \circ f)(x)$ due to Proposition \ref{identity lemma}, and that the Pierce expansion of $\overline{x}$ is given by the following:
\begin{align} \label{definition of xbar}
\begin{aligned}
&\begin{array}{ccccccccccccc}
\overline{x} = & [ & d_1, & d_2, & \dotsc, & d_{M-1}, & d_M, & d_M+1, \\
 &  &  d_{M+1}+1, & d_{M+2}+1, & \dotsc, & d_{2M-1} + 1, & d_{2M} + 1, & d_{2M}+2, \\
 &  & d_{2M+1} + 2, & d_{2M+2} + 2, & \dotsc, & d_{3M-1}+2, & d_{3M} + 2, & d_{3M}+3, \\
 &  & & & \dotsc, & \\
 &  & d_{jM+1} + j, & d_{jM+2} + j & \dotsc, & d_{(j+1)M-1}+j, & d_{(j+1)M} + j, & d_{(j+1)M} + j+1, \\
 &  & & & \dotsc, & & & & ]_P,
\end{array}
\end{aligned}
\end{align}
where $d_k \coloneqq d_k(x)$ for all $k \in \mathbb{N}$.

Define $g \colon A \to \mathbb{I}$ by $g(x) \coloneqq (\varphi \circ \Psi \circ f)(x)$ for each $x \in A$. One can easily see that $g$ is well-defined and injective.

\begin{claim} \label{violation of xbar}
We have $g(A) \subseteq E$.
\end{claim}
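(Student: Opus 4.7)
The plan is to show that for every $x \in A$, the number $\overline{x} \coloneqq g(x)$ has Pierce digits that grow along some subsequence at an exponential rate strictly less than $e$, so that \eqref{convergence law} fails at $\overline{x}$. The convenient subsequence is the set of ``end-of-block'' indices in \eqref{definition of xbar}, namely $n_j \coloneqq j(M+1)$ for $j \geq 1$; by the defining rule $\overline{\sigma}_{j(M+1)+l} = \sigma_{jM+l} + j$ applied with $l = 0$, together with Proposition \ref{identity lemma} (which identifies the digit sequence of $\overline{x}$ with $\Psi \circ f(x)$), the digits of $\overline{x}$ at these positions are
\[
d_{n_j}(\overline{x}) = d_{jM}(x) + j.
\]

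Next, I would take logarithms and decompose
\[
\frac{\log d_{n_j}(\overline{x})}{n_j} = \frac{M}{M+1}\cdot \frac{\log d_{jM}(x)}{jM} + \frac{1}{j(M+1)}\log\!\left(1 + \frac{j}{d_{jM}(x)}\right).
\]
The uniform convergence $(d_n(x))^{1/n} \to e$ on $A$ forces $\log d_{jM}(x)/(jM) \to 1$ as $j \to \infty$, so the first summand tends to $M/(M+1)$. For the second summand, the lower bound $d_n(x) > 2^n$ from \eqref{bounds for dn} gives $j/d_{jM}(x) \leq j/2^{jM} \to 0$, which drives that summand to $0$. Hence $(d_{n_j}(\overline{x}))^{1/n_j} \to e^{M/(M+1)}$.

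Since $M \geq 2$ we have $M/(M+1) < 1$, and therefore $e^{M/(M+1)} < e$. Consequently $(d_n(\overline{x}))^{1/n}$ admits a subsequential limit strictly smaller than $e$ and cannot converge to $e$, i.e., $\overline{x} \in E$. As $x \in A$ was arbitrary, $g(A) \subseteq E$. The only mildly delicate point in this plan is confirming that the additive perturbations $+j$ built into $\Psi$ are asymptotically negligible relative to $d_{jM}(x)$; this is precisely why \eqref{bounds for dn} was arranged to provide a uniform exponential lower bound on $A$. The heuristic to keep in mind is that inserting one extra ``bridging'' digit every $M$ positions dilutes the exponential growth rate of the digit sequence by the factor $M/(M+1)$, and that is exactly what breaks the LLN.
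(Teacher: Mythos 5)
Your proposal is correct. It follows the same general strategy as the paper (read off $d_n(\overline{x})$ from $\Psi$, exploit the uniform convergence $(d_n(x))^{1/n} \to e$ on $A$ together with \eqref{bounds for dn} to show the additive shift $+j$ is asymptotically negligible, and conclude the exponential growth rate is diluted by the factor $M/(M+1)$), but you streamline it by tracking only the single end-of-block subsequence $n_j = j(M+1)$ and noting that one subsequential limit $\neq e$ already places $\overline{x}$ in $E$. The paper instead works with all $n = j(M+1)+l$, bounds $j \leq jM+l \leq d_{jM+l}(x)$ via Proposition~\ref{digit condition lemma}(\ref{digit condition lemma 2}), and shows $\liminf$ and $\limsup$ of $(d_n(\overline{x}))^{1/n}$ both equal $e^{M/(M+1)}$, thereby obtaining the stronger fact that the full limit exists. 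Your version proves exactly what the claim requires and nothing more; the paper's version carries slightly more information at the cost of a bit more bookkeeping. Both are valid.
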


\begin{proof} [Proof of Claim \ref{violation of xbar}] \renewcommand\qedsymbol{$\blacksquare$}
Let $x \in A$, and put $\overline{x} \coloneqq g(x)$. By the Euclidean algorithm, for each $n \in \mathbb{N}$, we can uniquely express $n = j(M+1)+l$ for some $j \in \mathbb{N}_0$ and $l \in \{ 0, 1, \dotsc, M \}$, so that $d_n(\overline{x}) = d_{jM+l}(x) + j$ by~\eqref{definition of xbar}. Note that $j \leq jM+l \leq d_{jM+l}(x)$ by Proposition \ref{digit condition lemma}(\ref{digit condition lemma 2}). Then, for any $n \in \mathbb{N}$, we have the following:
\[
d_{jM+l}(x) \leq d_n(\overline{x}) = d_{jM+l}(x) + j \leq 2 d_{jM+l}(x).
\]
By the definition of $A$, we know that the sequence $( (d_{jM+l}(x))^{1/(jM+l)} )_{j \in \mathbb{N}}$ converges to $e$ as $j \to \infty$ for any $l \in \{ 0, 1, \dotsc, M \}$, as it is a subsequence of $( (d_n (x))^{1/n} )_{n \in \mathbb{N}}$. Hence, for any $l \in \{ 0, 1, \dotsc, M \}$, we have
\begin{align*}
\liminf_{n \to \infty} (d_n (\overline{x}))^{\frac{1}{n}} 
&\leq \liminf_{j \to \infty} (d_{jM+l}(x)+j)^{\frac{1}{j(M+1)+l}} \\
&\leq \lim_{j \to \infty} \left[ (2 d_{jM+l}(x) )^{\frac{1}{jM+l}} \right]^{\frac{jM+l}{j(M+1)+l}} = e^{\frac{M}{M+1}}
\end{align*}
and
\begin{align*}
\limsup_{n \to \infty} (d_n (\overline{x}))^{\frac{1}{n}} 
&\geq \limsup_{j \to \infty} (d_{jM+l}(x)+j)^{\frac{1}{j(M+1)+l}} \\
&\geq \lim_{j \to \infty} \left[ (d_{jM+l}(x))^{\frac{1}{jM+l}}  \right]^{\frac{jM+l}{j(M+1)+l}} = e^{\frac{M}{M+1}}.
\end{align*}
Now, using the well-known facts that for any real-valued sequence $(a_n)_{n \in \mathbb{N}}$, we have $\liminf a_n \leq \limsup a_n$, and that $\lim a_n$ exists if and only if $\liminf a_n = \limsup a_n$, we conclude that $( d_n(\overline{x}) )^{1/n} \to e^{M/(M+1)} \neq e$ as $n \to \infty$. This proves the claim.
\end{proof}

For any $x, y \in \mathbb{I}$, define the following:
\[
\rho (x,y) \coloneqq \inf \{ k \in \mathbb{N} : (f(x))^{(k)} \neq (f(y))^{(k)} \} = \inf \{ k \in \mathbb{N} : d_k(x) \neq d_k(y) \}.
\]
Put $\sigma = (\sigma_j)_{j \in \mathbb{N}} \coloneqq f(x)$ and $\tau = (\tau_j)_{j \in \mathbb{N}} \coloneqq f(y)$. If $x \neq y$, then $\sigma \neq \tau$ since $f$ is injective by Proposition~\ref{identity lemma}, so there exists some $k \in \mathbb{N}$ such that $\sigma^{(k-1)} = \tau^{(k-1)}$ and $\sigma_k \neq \tau_k$. In this case, since the set over which the infimum is taken is a non-empty subset of positive integers, it has the smallest element. Therefore, $\rho$ is well-defined.

We will derive both the lower and upper bounds for the distance between two distinct irrational numbers $x$ and $y$ in terms of their Pierce expansion digits.

\begin{claim} \label{bounds for distance claim}
Let $x, y \in \mathbb{I}$ with $x \neq y$, and put $n \coloneqq \rho (x,y)$, which is finite. If $d_n(x) < d_n(y)$, then
\begin{align} \label{bounds for distance}
\left( \prod_{k=1}^{n-1} \frac{1}{d_k} \right) \frac{1}{d_n(y) (d_{n+1}(y)+1)} \leq |x-y| \leq \prod_{k=1}^{n-1} \frac{1}{d_k},
\end{align}
where $d_k \coloneqq d_k(x) = d_k(y)$ for each $k \in \{ 1, 2, \dotsc, n-1 \}$, with the convention that the product over the empty set equals $1$.
\end{claim}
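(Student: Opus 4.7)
My approach would be to bypass the symbolic apparatus and work directly with the Pierce algorithm \eqref{Pierce algorithm 1}--\eqref{Pierce algorithm 2}. The defining relation $T(z) = 1 - d_1(z) z$ rearranges to $z = (1 - T(z))/d_1(z)$; iterating this $n-1$ times produces the telescoping identity
\[
z = \sum_{j=1}^{n-1} \frac{(-1)^{j+1}}{d_1(z) \cdots d_j(z)} + (-1)^{n-1} T^{n-1}(z) \prod_{k=1}^{n-1} \frac{1}{d_k(z)},
\]
with the convention that the empty sum vanishes and the empty product equals $1$ (so the case $n = 1$ reads trivially $z = T^0(z) = z$). Applying this identity to both $z = x$ and $z = y$ and subtracting, using that $d_k(x) = d_k(y) = d_k$ for every $k < n$, the two finite sums cancel and I obtain
\[
|x - y| = \left( \prod_{k=1}^{n-1} \frac{1}{d_k} \right) \bigl| T^{n-1}(x) - T^{n-1}(y) \bigr|.
\]

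The upper bound will then be immediate, since $T^{n-1}(x), T^{n-1}(y) \in [0, 1)$ forces $|T^{n-1}(x) - T^{n-1}(y)| \leq 1$. For the lower bound, I would first establish the orientation $T^{n-1}(x) > T^{n-1}(y)$. Because $y \in \mathbb{I}$ has no terminating Pierce expansion, the definition in \eqref{Pierce algorithm 1} gives $d_m(z) = \lfloor 1/T^{m-1}(z) \rfloor$ for all $m$ whenever $z \in \mathbb{I}$, so the hypothesis $d_n(x) < d_n(y)$ together with the integrality of the digits yields
\[
\frac{1}{T^{n-1}(x)} < d_n(x) + 1 \leq d_n(y),
\]
i.e., $T^{n-1}(x) > 1/d_n(y)$. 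One more unrolling of the recursion for $y$ gives $T^{n-1}(y) = (1 - T^n(y))/d_n(y)$, so
\[
T^{n-1}(x) - T^{n-1}(y) > \frac{1}{d_n(y)} - \frac{1 - T^n(y)}{d_n(y)} = \frac{T^n(y)}{d_n(y)}.
\]
Applying the same floor identity at level $n+1$ to $y \in \mathbb{I}$ gives $T^n(y) > 1/(d_{n+1}(y) + 1)$, and substituting back produces the lower bound in \eqref{bounds for distance}.

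The computation is essentially bookkeeping, so I do not expect a substantial obstacle. The only delicate point will be the orientation argument $T^{n-1}(x) > T^{n-1}(y)$, which requires both the integrality step $d_n(x) + 1 \leq d_n(y)$ and the strict positivity of $T^k(y)$ for all $k$, the latter being precisely where the hypothesis $y \in \mathbb{I}$ enters. The empty-product and empty-sum conventions then handle the $n = 1$ case uniformly alongside the rest, so no separate argument is needed.
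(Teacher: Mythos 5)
Your proof is correct, and it takes a genuinely different route from the paper. The paper establishes \eqref{bounds for distance} through the symbolic machinery: it regards $x$ and $y$ as $\varphi(\sigma)$ and $\varphi(\tau)$ for $\sigma, \tau \in \Sigma_\infty$, locates the endpoints of the fundamental intervals $I(\tau^{(n)})$ and $I(\tau^{(n+1)})$, observes (splitting on the parity of $n$) that exactly two of these endpoints lie between $x$ and $y$, and computes their gap for the lower bound; the upper bound comes from the triangle inequality together with Proposition~\ref{distance between sigma and sigma n}. Your argument instead works directly with the shift $T$: one induction on $T(z) = 1 - d_1(z)z$ gives the telescoping identity, subtraction collapses the common prefix, and the whole claim reduces to sandwiching $T^{n-1}(x) - T^{n-1}(y)$ between $1/[d_n(y)(d_{n+1}(y)+1)]$ and $1$ using only the floor relation $d_m(z) = \lfloor 1/T^{m-1}(z)\rfloor$ (valid since every iterate of an irrational stays irrational and nonzero). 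Each approach has merit: the paper's fits the symbolic framework used elsewhere in the paper, while yours is self-contained, avoids the parity case split and the fundamental-interval apparatus entirely, and in fact produces the sharper \emph{exact} identity $|x-y| = \bigl(\prod_{k=1}^{n-1} d_k^{-1}\bigr)\,|T^{n-1}(x) - T^{n-1}(y)|$ before estimating. One small bonus of your route: the paper's upper-bound chain passes through $d_n(x)^{-1} + d_n(y)^{-1} < 1$, which breaks down when $n = 1$ and $d_1(x) = 1$ (the conclusion $|x-y| \le 1$ still holds trivially, so there is no real error, but the displayed chain is not literally valid there); your derivation handles $n=1$ uniformly with the same one-line estimate $|T^0(x) - T^0(y)| < 1$.
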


\begin{proof} [Proof of Claim \ref{bounds for distance claim}] \renewcommand\qedsymbol{$\blacksquare$}
By the definition of $n$, we have $d_k(x) = d_k(y)$ for each $k \in \{ 1, 2, \dotsc, n-1 \}$, and we can denote the common value by $d_k$.

Put $\sigma \coloneqq f(x)$ and $\tau \coloneqq f(y)$. Then, by Proposition \ref{identity lemma}, both $\sigma$ and $\tau$ are elements of $\Sigma_\infty$. Furthermore, observe that $\tau^{(n)} \in \Sigma_n$ and $\tau^{(n+1)} \in \Sigma_{n+1}$. By the definition of $I(\sigma)$, we have $y \in I ({\tau^{(n+1)}}) \subseteq I ({\tau^{(n)}})$, though $x \not \in I ({\tau^{(n)}})$. Now, consider all the endpoints of $I ({\tau^{(n)}})$ and $I ({\tau^{(n+1)}})$. Exactly two of the endpoints, $\varphi (\tau^{(n)})$ and $\varphi (\widehat{\tau^{(n+1)}})$, lie between $x$ and $y$, while the other two, $\varphi (\widehat{\tau^{(n)}})$ and $\varphi (\tau^{(n+1)})$, do not. Indeed, Proposition \ref{I sigma} and the assumption $d_n(x)<d_n(y)$ tell us that 
\[
\begin{cases}
\varphi (\widehat{\tau^{(n)}}) < \varphi (\tau^{(n+1)}) < y < \varphi (\widehat{\tau^{(n+1)}}) < \varphi ({\tau^{(n)}}) \leq \varphi (\widehat{\sigma^{(n)}}) < x, &\text{if $n$ is odd}; \\
\varphi (\widehat{\tau^{(n)}}) > \varphi (\tau^{(n+1)}) > y > \varphi (\widehat{\tau^{(n+1)}}) > \varphi ({\tau^{(n)}}) \geq \varphi (\widehat{\sigma^{(n)}}) > x, &\text{if $n$ is even}.
\end{cases}
\]
Since $\tau^{(n)}$ and $\widehat{\tau^{(n+1)}}$ share the initial $n$ terms, $(d_k(y))_{k=1}^n$, we can apply the definition of $\varphi$ to obtain the following:
\begin{align*}
|x-y|
&\geq |\varphi (\tau^{(n)}) - \varphi (\widehat{\tau^{(n+1)}})| = \left( \prod_{k=1}^{n-1} \frac{1}{d_k} \right) \frac{1}{d_n(y) (d_{n+1}(y)+1)}.
\end{align*}
On the other hand, since $\varphi (\sigma) = (\varphi \circ f) (x) = x$ and $\varphi (\tau) = (\varphi \circ f) (y) = y$ by Proposition \ref{identity lemma}, and since $\sigma^{(n-1)} = \tau^{(n-1)}$ by the definition of $n$, we have the following:
\[
|x-y|
= |[\varphi (\sigma) - \varphi (\sigma^{(n-1)})] - [\varphi (\tau) - \varphi (\tau^{(n-1)})]|.
\]
This gives us the following inequality:
\[
|x-y| \leq |\varphi (\sigma) - \varphi (\sigma^{(n-1)})| + |\varphi (\tau) - \varphi (\tau^{(n-1)})|.
\]
Using Proposition \ref{distance between sigma and sigma n}, we obtain
\[
|x-y| \leq \left( \prod_{k=1}^{n-1} \frac{1}{d_k} \right) \left( \frac{1}{d_n(x)} + \frac{1}{d_n(y)} \right) < \prod_{k=1}^{n-1} \frac{1}{d_k},
\]
as desired.
\end{proof}

Recall that $M>N$ and, in particular, $M \geq 2$. Let
\begin{align*} 
\varepsilon \coloneqq \frac{12}{M} \quad \text{and} \quad c \coloneqq 4^{6M(M+1)}.
\end{align*}
For later use, we note that
\begin{align} \label{bound of epsilon and c}
\varepsilon > \frac{1}{M \log M-1} \quad \text{and} \quad c > 2e \cdot 4^{M(M+1)}.
\end{align}
Indeed, a direct calculation shows that the former inequality holds when $M=2$. For $M \geq 3$, we have $M \log M > 2$ and $\log M > 1$ so that $(M \log M - 1)^{-1} < 6 (M \log M-1)^{-1} < 12 (M \log M)^{-1} < 12M^{-1} = \varepsilon$. As for the latter inequality, it suffices to observe that $4^{5M(M+1)} > 4^{2 \cdot 3} > 2e$.

\begin{claim} \label{Holder condition}
For any $x,y \in A$, we have the following:
\[
| g (x) - g (y) | \geq c^{-1} | x - y |^{1+3 \varepsilon}.
\]
\end{claim}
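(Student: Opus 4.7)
My plan is to apply Claim~\ref{bounds for distance claim} to both pairs $(x,y)$ and $(\overline{x}, \overline{y}) \coloneqq (g(x), g(y))$, and then compare the resulting estimates. Fix distinct $x, y \in A$, let $n \coloneqq \rho(x,y)$, and assume without loss of generality that $d_n(x) < d_n(y)$. Writing $n = j_0 M + l_0$ uniquely with $j_0 \in \mathbb{N}_0$ and $l_0 \in \{1, \dotsc, M\}$, I would read off from \eqref{definition of xbar} that the first $\overline{n}-1$ Pierce digits of $\overline{x}$ and $\overline{y}$ coincide, where $\overline{n} \coloneqq j_0(M+1) + l_0 = n + j_0$, and that at position $\overline{n}$ one has $d_{\overline{n}}(\overline{x}) = d_n(x) + j_0 < d_n(y) + j_0 = d_{\overline{n}}(\overline{y})$. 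Hence $\rho(\overline{x}, \overline{y}) = \overline{n}$, and Claim~\ref{bounds for distance claim} supplies both the desired lower bound on $|g(x) - g(y)|$ in terms of the Pierce digits of $\overline{y}$ and the upper bound $|x - y| \leq \prod_{k=1}^{n-1} 1/d_k$.

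The claim then reduces to the product inequality
\[
\prod_{k=1}^{\overline{n}-1} d_k(\overline{x}) \cdot d_{\overline{n}}(\overline{y})\bigl( d_{\overline{n}+1}(\overline{y}) + 1 \bigr) \leq c \left( \prod_{k=1}^{n-1} d_k \right)^{1+3\varepsilon}.
\]
To estimate the left-hand side, I would enumerate from \eqref{definition of xbar} precisely how each original digit $d_k$ (for $1 \leq k \leq n-1$) appears in the Pierce sequence of $\overline{x}$: a digit $d_k$ with $M \nmid k$ appears once, with a shift of $\lfloor k/M \rfloor$, while each boundary digit $d_{jM}$ with $1 \leq j \leq j_0$ appears twice, with shifts $j-1$ and $j$. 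Proposition~\ref{digit condition lemma}(\ref{digit condition lemma 2}) forces every such shift to be dominated by the digit itself, so that the shifted digit is at most $2 d_k$. This yields
\[
\prod_{k=1}^{\overline{n}-1} d_k(\overline{x}) \leq 2^{\overline{n}-1} \prod_{k=1}^{n-1} d_k \cdot \prod_{j=1}^{j_0} d_{jM},
\]
together with $d_{\overline{n}}(\overline{y})\bigl(d_{\overline{n}+1}(\overline{y}) + 1\bigr) \leq 8\, d_n(y)\, d_{n+1}(y)$ by the same type of argument.

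Finally, I would invoke the uniform bound \eqref{bounds for dn} to control the resulting excess factors. Since $jM \geq M > N$, one has $d_{jM} < 4^{jM}$, so $\prod_{j=1}^{j_0} d_{jM} < 4^{M j_0(j_0+1)/2} \leq 4^{n^2/M}$ (using $j_0 \leq (n-1)/M$); an analogous estimate, combined with the strict monotonicity $d_k(y) \leq d_N(y) < 4^N$ for $k \leq N$, controls the factor $8\, d_n(y)\, d_{n+1}(y)$, and $2^{\overline{n}-1} \leq 4^n$ follows from $\overline{n} \leq 2n$. On the other hand, for $n$ sufficiently large relative to $N$, $\prod_{k=1}^{n-1} d_k \geq \prod_{k=N}^{n-1} 2^k \geq 2^{n^2/4}$, so by the choice $\varepsilon = 12/M$, $\bigl(\prod_{k=1}^{n-1} d_k\bigr)^{3\varepsilon}$ grows like $2^{9 n^2/M}$, which easily dominates the $4^{n^2/M} = 2^{2 n^2/M}$ excess produced by the doubled boundary digits; the bounded-$n$ range is absorbed by the constant $c = 4^{6M(M+1)} = 2^{12 M(M+1)}$ via \eqref{bound of epsilon and c}. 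The main obstacle is precisely this careful bookkeeping of which original digits appear how often (and with what shift) in the Pierce sequence of $\overline{x}$, together with verifying that the calibrated choice $\varepsilon = 12/M$, $c = 4^{6M(M+1)}$ dominates both the quadratic-in-$n$ excess from the doubled digits and the bounded small-$n$ corrections uniformly for every pair in $A$.
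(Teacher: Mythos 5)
Your overall plan is correct and the combinatorial bookkeeping — that each original digit $d_k$, $1 \le k \le n-1$, appears once in the first $\overline{n}-1$ positions of $\overline{x}$ while the boundary digits $d_{jM}$ ($1 \le j \le j_0$) appear twice, and that every such shifted copy is at most $2d_k$ by Proposition~\ref{digit condition lemma}(\ref{digit condition lemma 2}) — is accurate. Your reduction to a single product inequality via two applications of Claim~\ref{bounds for distance claim} is exactly the right starting move. The route you then take is, however, genuinely different from the paper's. The paper splits explicitly into Case~I ($n\le 2M$, handled by showing $|\overline{x}-\overline{y}|\ge c^{-1}$ directly) and Case~II ($n>2M$), and in Case~II it argues \emph{locally}: for each block $j\ge1$ of $M$ consecutive digits it absorbs the extra boundary factor $(d_{(j+1)M}+(j+1))^{-1}$ into $\bigl(\prod_{l=1}^M(d_{jM+l}+j)\bigr)^{-\varepsilon}$ via a tailored inequality linking $\varepsilon=12/M$ to the block, then controls the shifts $d_k+j\le(1+M^{-1})d_k$ and collects $(1+M^{-1})^M\le e$ factors. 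You instead argue \emph{globally}: convert everything to powers of $2$ via the uniform sandwich $2^k\le d_k<4^k$ (valid for $k\ge N$), compare exponential orders ($2^{2n^2/M}$ excess versus $2^{9n^2/M}$ gain), and let $c$ soak up the finitely many small-$n$ cases. Your global route does work with $c=4^{6M(M+1)}$ — checking the downward-parabolic exponent $6n+5-7n^2/M$ against $12M(M+1)$ confirms uniformity over all $n$ — and is arguably shorter conceptually, but it pushes more work into a quantitative verification that you leave sketched. Two small caveats: (i) the citation of \eqref{bound of epsilon and c} is not quite what you need there, since $c>2e\cdot4^{M(M+1)}$ is far weaker than the bound your small-$n$ cases require; what actually saves you is the full strength of $c=4^{6M(M+1)}$, and you should say so. (ii) You should state the small-$n$/large-$n$ threshold explicitly (e.g.\ $n\le 2M$ versus $n>2M$, as in the paper) rather than appealing to ``$n$ sufficiently large relative to $N$,'' since the claim must hold for every pair $x,y\in A$.
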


\begin{proof} [Proof of Claim \ref{Holder condition}] \renewcommand\qedsymbol{$\blacksquare$}
If $x=y$, the inequality trivially holds. Suppose $x \neq y$, and let $n \coloneqq \rho (x,y) < +\infty$. Then, $d_k(x) = d_k(y)$ for each $k \in \{ 1, 2, \dotsc, n-1 \}$, and we denote the common value by $d_k$. Without loss of generality, assume that $d_n(x) < d_n(y)$. Put $\overline{x} \coloneqq g(x)$ and $\overline{y} \coloneqq g(y)$. By the injectivity of $g$, it follows that $\overline{x} \neq \overline{y}$. Put $r \coloneqq \rho (\overline{x}, \overline{y}) < +\infty$. Then, $d_k(\overline{x}) = d_k(\overline{y})$ for each $k \in \{ 1, 2, \dotsc, r-1 \}$, and it is clear that $d_r(\overline{x}) < d_r(\overline{y})$. Now, we break the proof into two cases depending on the size of $n$.

{\sc Case I.}
Assume $n \leq 2M$. Then, by \eqref{definition of xbar}, we have $r \leq 2M+1$, which implies that $d_{r+1}(\overline{y}) \leq d_{2M+2} (\overline{y})$ by Proposition \ref{digit condition lemma}(\ref{digit condition lemma 1}). Using the lower bound from \eqref{bounds for distance}, along with \eqref{definition of xbar}, we obtain the following:
\begin{align*}
|\overline{x}-\overline{y}|
&\geq \left( \prod_{k=1}^r \frac{1}{d_k(\overline{y})} \right) \frac{1}{d_{r+1}(\overline{y})+1} \\
&\geq \left( \prod_{k=1}^{2M+1} \frac{1}{d_k(\overline{y})} \right) \frac{1}{d_{2M+2}(\overline{y})+1} 
= \left( \prod_{k=1}^{M} \frac{1}{d_k(y)} \right) \frac{1}{d_M(y)+1} \left( \prod_{k=M+1}^{2M} \frac{1}{d_k(y)+1} \right) \frac{1}{d_{2M}(y)+3}.
\end{align*}
Since $y \in A$ and $M>N$, Proposition \ref{digit condition lemma}(\ref{digit condition lemma 1}) and \eqref{bounds for dn} tell us that $d_1(y)<d_2(y) < \dotsb < d_M(y) < 4^M$ and $d_k(y) < 4^k$ for each $k \in \{ M+1, M+2, \dotsc, 2M \}$. Thus, we have the following:
\begin{align*}
|\overline{x}-\overline{y}|
&\geq \left( \prod_{k=1}^M \frac{1}{4^M} \right) \frac{1}{4^M+1} \left( \prod_{k=M+1}^{2M} \frac{1}{4^k+1} \right) \frac{1}{4^{2M}+3} \\
&> \left( \frac{1}{4^{2M}+3} \right)^{2M+2} > \left( \frac{1}{4^{3M}} \right)^{2M+2} = \frac{1}{4^{6M(M+1)}} = c^{-1}.
\end{align*}
Therefore, since $|x-y| \leq 1$ and $1+3\varepsilon > 1$, it follows that $|\overline{x}-\overline{y}| \geq c^{-1} |x-y|^{1+3\varepsilon}$.

{\sc Case II.}
Assume $pM < n \leq (p+1)M$ for some positive integer $p \geq 2$. Then, by \eqref{definition of xbar}, we have $p(M+1) +1 \leq r \leq p(M+1)+M$. To obtain a lower bound for $|\overline{x}-\overline{y}|$ using \eqref{bounds for distance}, we first note that $d_{r+1}(\overline{y}) \leq d_{n+1}(y) + p$ by considering the following two cases:
\begin{itemize}
\item
{\sc Case 1:} If $pM<n<(p+1)M$, then
\[
d_r (\overline{y}) = d_n(y) + p \quad \text{and} \quad d_{r+1}(\overline{y}) = d_{n+1}(y) + p.
\]
\item
{\sc Case 2:} If $n=(p+1)M$, then, since $d_n(y) + 1 \leq d_{n+1} (y)$ by Proposition \ref{digit condition lemma}(\ref{digit condition lemma 1}), we have
\[
d_r (\overline{y}) = d_{n}(y) + p \quad \text{and} \quad d_{r+1}(\overline{y}) = d_{n} (y)+ (p+1)  \leq d_{n+1}(y) + p.
\]
(See the two rightmost columns in \eqref{definition of xbar}.)
\end{itemize}
Thus, by the lower bound in \eqref{bounds for distance}, \eqref{definition of xbar}, and the bound for $d_{r+1}(\overline{y})+1$ obtained above, we have the following:
\begin{align*}
|\overline{x}-\overline{y}|
&\geq \left( \prod_{k=1}^r \frac{1}{d_k(\overline{y})} \right) \frac{1}{d_{r+1}(\overline{y})+1}
= \left( \prod_{k=1}^{p(M+1)} \frac{1}{d_k(\overline{y})} \right) \left( \prod_{k=p(M+1)+1}^r \frac{1}{d_k(\overline{y})} \right) \frac{1}{d_{r+1}(\overline{y})+1}.
\end{align*}
Breaking this into products over different ranges, we have the following:
\begin{align*}
|\overline{x}-\overline{y}|
&\geq \left[ \left( \prod_{k=1}^M \frac{1}{d_k(y)} \right) \frac{1}{d_M(y)+1} \right]
\left[ \left( \prod_{k=M+1}^{2M} \frac{1}{d_k(y)+1} \right) \frac{1}{d_{2M}(y)+2} \right]  \\
&\quad \times \dotsb \times \left[ \left( \prod_{k=(p-1)M+1}^{pM} \frac{1}{d_k(y)+(p-1)} \right) \frac{1}{d_{pM}(y)+p} \right]
\left( \prod_{k=pM+1}^n \frac{1}{d_k(y)+p} \right) \frac{1}{d_{n+1}(y) + (p+1)}.
\end{align*}
This simplifies to the following:
\begin{align} \label{first estimation}
|\overline{x}-\overline{y}|
\geq \left( \prod_{j=0}^{p-1} \left[ \left( \prod_{l=1}^M \frac{1}{d_{jM+l} +j} \right) \frac{1}{d_{(j+1)M}+(j+1)} \right] \right) \left( \prod_{k = pM+1}^{n-1} \frac{1}{d_k+p} \right) \frac{1}{(d_n(y)+p)(d_{n+1}(y) + p+1)}.
\end{align}

Now, we investigate the term inside the square brackets in \eqref{first estimation} by considering two cases depending on the value of $j$. For $j =0$, we have $d_1(y) < d_2(y) < \dotsb < d_M(y) < 4^M$ by Proposition \ref{digit condition lemma}(\ref{digit condition lemma 1}) and \eqref{bounds for dn}; therefore,
\begin{align} \label{second square bracket estimation}
\left( \prod_{l=1}^M \frac{1}{d_l(y)} \right) \frac{1}{d_M(y)+1} 
> \left( \prod_{l=1}^M \frac{1}{4^M} \right) \frac{1}{4^M+1} > \frac{1}{4^{M \cdot M}} \cdot \frac{1}{4^M+4^M} = \frac{1}{2 \cdot 4^{M(M+1)}}.
\end{align}

Fix $j \in \{ 1, 2, \dotsc, p-1 \}$. Since $y \in A$ and $(j+1)M > M$, by \eqref{bounds for dn}, we have the following:
\begin{align} \label{eq10}
d_{(j+1)M}(y) + (j+1) 
&< 4^{(j+1)M} + (j+1) \nonumber \\
&< 4^{(j+1)M} + 4^{(j+1)M} = 2^{2jM+(2M+1)} \nonumber \\
&< 2^{2jM + 4jM} = 4^{3jM}.
\end{align}
Now, note that
\begin{align*}
\varepsilon = \frac{12}{M} 
&> \frac{12}{2M + \dfrac{M+1}{j}} 
= \frac{6jM}{jM^2 + \dfrac{M(M+1)}{2}}
= \frac{3jM}{\sum \limits_{l=1}^M (jM+l)} \cdot \frac{\log 4}{\log 2} 
= \frac{3jM \log 4}{\sum \limits_{l=1}^M \log (2^{jM+l})} \\
&> \frac{3jM \log 4}{\sum \limits_{l=1}^M \log (2^{jM+l}+j)} 
= \frac{3jM \log 4}{\log \left( \prod \limits_{l=1}^M (2^{jM+l}+j) \right)},
\end{align*}
which leads to
\begin{align} \label{eq11}
3jM \log 4 <  \varepsilon \log \left( \prod_{l=1}^M (2^{jM+l}+j) \right),
\quad \text{or, equivalently,} \quad
4^{3jM} <  \left( \prod_{l=1}^M (2^{jM+l}+j) \right)^\varepsilon.
\end{align}
Hence, from \eqref{eq10} and \eqref{eq11}, we obtain the following:
\[
d_{(j+1)M}(y) + (j+1) 
< \left( \prod_{l=1}^M (2^{jM+l}+j) \right)^{\varepsilon} 
< \left( \prod_{l=1}^M (d_{jM+l}(y)+j) \right)^{\varepsilon}.
\]
Thus, we obtain the following lower bound for the square-bracketed term in \eqref{first estimation} for each $j \in \{ 1, 2, \dotsc, p-1 \}$:
\begin{align} \label{first square bracket estimation}
\left( \prod_{l=1}^M \frac{1}{d_{jM+l} +j} \right) \frac{1}{d_{(j+1)M}+(j+1)} > \left( \prod_{l=1}^M \frac{1}{d_{jM+l}+j} \right)^{1+\varepsilon}.
\end{align}

Next, we find a lower bound for the rightmost term $[(d_n(y)+p) (d_{n+1}(y) + p+1)]^{-1}$ in \eqref{first estimation}. By \eqref{bounds for dn}, we have the following:
\[
(d_n(y)+p) (d_{n+1}(y) + p+1) < (4^n+p)(4^{n+1}+p+1).
\]
Since $pM<n$, it is clear that $p<n$, and thus $p < 4^n$ and $p+1 < 4^{n+1}$. Then,
\begin{align} \label{eq13}
(d_n(y)+p) (d_{n+1}(y) + p+1) < (2 \cdot 4^n)(2 \cdot 4^{n+1}) = 4^{2n+2} \leq 4^{3n}.
\end{align}
Now, note that since $p \geq 2$ and $pM+1 \leq n \leq (p+1)M$, we have
\begin{align*}
\varepsilon = \frac{12}{M}
&> \frac{12}{(p-1)M+1}
= \frac{6(M+pM)}{\dfrac{M+pM}{2} \cdot (pM-(M-1))}
= \frac{3(p+1)M}{\sum \limits_{k=M}^{pM} k} \cdot \frac{\log 4}{\log 2} \\
&\geq \frac{3n}{\sum \limits_{k=M}^{n-1} k} \cdot \frac{\log 4}{\log 2}
= \frac{3n \log 4}{\log \left( \prod \limits_{k=M}^{n-1} 2^k \right)},
\end{align*}
which leads to
\begin{align} \label{eq14}
3n \log 4 \leq \varepsilon \log \left( \prod_{k=M}^{n-1} 2^k \right),
\quad \text{or, equivalently,} \quad
4^{3n} \leq \left( \prod_{k=M}^{n-1} 2^k \right)^\varepsilon.
\end{align}
Hence, from \eqref{eq13} and \eqref{eq14}, together with \eqref{bounds for dn}, we obtain
\[
(d_n(y)+p) (d_{n+1}(y) + p+1)
\leq \left( \prod_{k=M}^{n-1} 2^k \right)^{\varepsilon} \leq \left( \prod_{k=M}^{n-1} d_k(y) \right)^\varepsilon,
\]
which implies
\begin{align} \label{last two terms estimation}
\frac{1}{(d_n(y)+p) (d_{n+1}(y) + p+1)} \geq \left( \prod_{k=M}^{n-1} \frac{1}{d_k} \right)^\varepsilon \geq \left( \prod_{k=1}^{n-1} \frac{1}{d_k} \right)^\varepsilon.
\end{align}
Combining \eqref{first estimation}, \eqref{second square bracket estimation}, \eqref{first square bracket estimation}, and \eqref{last two terms estimation}, we now have the following:
\begin{align*}
|\overline{x}-\overline{y}| \geq \frac{1}{2 \cdot 4^{M(M+1)}} \left[ \prod_{j=1}^{p-1} \left( \prod_{l=1}^M \frac{1}{d_{jM+l} +j} \right)^{1+\varepsilon} \right] \left( \prod_{k=pM+1}^{n-1} \frac{1}{d_k +p} \right) \left( \prod_{k=1}^{n-1} \frac{1}{d_k} \right)^\varepsilon.
\end{align*}
By Proposition \ref{digit condition lemma}(\ref{digit condition lemma 2}), for all $j \in \{ 1, 2, \dotsc, p-1 \}$ and $l \in \{ 1, 2, \dotsc, M \}$, we have $jM < jM+l \leq d_{jM+l}$, which implies
\[
d_{jM+l} + j < d_{jM+l} + \frac{1}{M} d_{jM+l} = \left( 1 + \frac{1}{M} \right) d_{jM+l}.
\]
Similarly, for all $k \in \{ pM+1, pM+2, \dotsc, n-1 \}$, we have $pM < k \leq d_k$, which implies $d_k  + p < ( 1 + M^{-1} ) d_k$. Thus,
\begin{align} \label{second estimation}
|\overline{x}-\overline{y}|
\geq\ & \frac{1}{2 \cdot 4^{M(M+1)}} \left[ \prod_{j=1}^{p-1} \left( \prod_{l=1}^M \frac{1}{( 1 + M^{-1} ) d_{jM+l}} \right)^{1+\varepsilon} \right] \left( \prod_{k=pM+1}^{n-1} \frac{1}{( 1 + M^{-1} ) d_k} \right) \left( \prod_{k=1}^{n-1} \frac{1}{d_k} \right)^\varepsilon \nonumber \\
=\ & \frac{1}{2 \cdot 4^{M(M+1)}} \left[ \prod_{j=1}^{p-1} \left( \frac{1}{(1 + M^{-1} )^M} \prod_{k=jM+1}^{(j+1)M} \frac{1}{d_{k}} \right)^{1+\varepsilon} \right] \frac{1}{( 1 + M^{-1} )^{n-1-pM}} \nonumber \\
& \times \left( \prod_{k=pM+1}^{n-1} \frac{1}{d_k} \right) \left[ \left( \prod_{k=1}^{M} \frac{1}{d_k} \right)^\varepsilon \left( \prod_{k=M+1}^{pM} \frac{1}{d_k} \right)^\varepsilon \left( \prod_{k=pM+1}^{n-1} \frac{1}{d_k} \right)^\varepsilon \right] \nonumber \\
\geq\ & \frac{1}{2 \cdot 4^{M(M+1)}} \frac{1}{( 1 + M^{-1} )^M} \left( \prod_{k=1}^M \frac{1}{d_k} \right)^\varepsilon \nonumber \\
& \times \left[ \prod_{j=1}^{p-1} \left( \frac{1}{( 1 + M^{-1} )^{M(1+\varepsilon)}} \left( \prod_{k=jM+1}^{(j+1)M} \frac{1}{d_k} \right)^{1+2 \varepsilon} \right) \right] \left( \prod_{k=pM+1}^{n-1} \frac{1}{d_k} \right)^{1+\varepsilon}.
\end{align}
The last inequality holds since $n-1 < (p+1)M$ by the assumption for {\sc Case II}.

Next, we estimate each term of the product in the square-bracketed part in \eqref{second estimation}. Fix $j \in \{ 1, 2, \dotsc, p-1 \}$. Recall from \eqref{bound of epsilon and c} that $\varepsilon > (M \log M-1)^{-1}$. Since $k \leq d_k$ for all $k \in \mathbb{N}$ by Proposition~\ref{digit condition lemma}(\ref{digit condition lemma 2}), we have the following:
\begin{align*}
\varepsilon
&> \frac{1}{M \log M-1} = \frac{1}{\sum \limits_{l=1}^M \log M - 1} 
> \frac{1}{\sum \limits_{l=1}^M \log (jM+l) - 1} \\
&\geq \frac{1}{\sum \limits_{l=1}^M \log d_{jM+l} - 1}
= \frac{1}{\sum \limits_{k=jM+1}^{(j+1)M} \log d_k - 1}
= \frac{1}{\log \left( \prod \limits_{k=jM+1}^{(j+1)M} d_{k} \right) - 1},
\end{align*}
which implies
\begin{align*}
1 + \varepsilon \leq \varepsilon \log \left( \prod_{k=jM+1}^{(j+1)M} d_{k} \right),
\quad \text{or, equivalently,} \quad
e^{1+\varepsilon} \leq \left( \prod_{k=jM+1}^{(j+1)M} d_{k} \right)^\varepsilon.
\end{align*}
Using the well-known fact that
\begin{align} \label{e is bounded}
\left( 1 + \frac{1}{n} \right)^n \leq e \quad \text{for all } n \in \mathbb{N},
\end{align}
we deduce that
\[
\left( 1 + \frac{1}{M} \right)^{M(1+\varepsilon)} \leq \left( \prod_{k=jM+1}^{(j+1)M} d_k \right)^\varepsilon.
\]
Thus, for any $j \in \{ 1, 2, \dotsc, p-1 \}$, we have
\begin{align} \label{estimation of 1+1/M}
\frac{1}{\left( 1 + M^{-1} \right)^{M(1+\varepsilon)}} \left( \prod_{k=jM+1}^{(j+1)M} \frac{1}{d_k} \right)^{1+2\varepsilon}
\geq \left( \prod_{k=jM+1}^{(j+1)M} \frac{1}{d_k} \right)^{1 + 3 \varepsilon}.
\end{align}
Therefore, combining \eqref{second estimation} and \eqref{estimation of 1+1/M}, we finally obtain the following:
\begin{align*}
|\overline{x}-\overline{y}|
&\geq \frac{1}{2 \cdot 4^{M(M+1)}} \frac{1}{\left( 1 + M^{-1} \right)^M} \left( \prod_{k=1}^M \frac{1}{d_k} \right)^\varepsilon \left( \prod_{k=M+1}^{pM} \frac{1}{d_k} \right)^{1+3\varepsilon} \left( \prod_{k=pM+1}^{n-1} \frac{1}{d_k} \right)^{1+\varepsilon} \\
&\geq \frac{1}{2e \cdot 4^{M(M+1)}} \left( \prod_{k=1}^{n-1} \frac{1}{d_k(y)} \right)^{1+3 \varepsilon}
\geq c^{-1} |x-y|^{1+3\varepsilon},
\end{align*}
where the second inequality follows from \eqref{e is bounded}, and the last from \eqref{bound of epsilon and c} and the upper bound in \eqref{bounds for distance}.
\end{proof}

For the map $g \colon A \to \mathbb{I}$, let its inverse on $g(A)$ be denoted by $g^{-1} \colon g(A) \to A$. By Claim \ref{dimension of A}, we have~$1 = \hdim A = \hdim g^{-1}(g(A))$. Additionally, by Claim \ref{Holder condition}, for all $\overline{x}, \overline{y} \in g(A)$, we have the inequality
\[
|g^{-1} (\overline{x}) - g^{-1}(\overline{y})| \leq c^{1/(1+3\varepsilon)} |\overline{x} - \overline{y}|^{1/(1+3\varepsilon)},
\]
which implies that $g^{-1}$ is $[1/(1+3\varepsilon)]$-H{\"o}lder continuous. Consequently, by Proposition \ref{Holder dimension}, we obtain $\hdim g^{-1}(g(A)) \leq (1+3\varepsilon) \hdim g(A)$, and thus $\hdim \allowbreak g(A) \allowbreak \geq 1/(1+3\varepsilon)$. By applying the monotonicity property of the Hausdorff dimension (see \cite[p.~48]{Fal14}) to the inclusion $E \supseteq g(A)$ (Claim~\ref{violation of xbar}), we obtain the following:
\[
\hdim E \geq \hdim g(A) \geq \frac{1}{1+3\varepsilon} = \frac{1}{1 + (36/M)}.
\]
Since $M$ ($>N$) can be chosen arbitrarily large, we conclude that $\hdim E \geq 1$, as required.
\end{proof}

\begin{remark} \label{digit condition remark}
Our definition of $\overline{x}$ in \eqref{definition of xbar} differs from the one in Wu's original work \cite{Wu00} on the LLN in Engel expansions. This discrepancy was inevitable because the expansion constructed in \cite{Wu00} only satisfies the non-decreasing condition for the digits, but not the strict increasing condition; therefore, it is not Pierce realizable according to Proposition \ref{strict increasing condition}. In other words, the expression considered in \cite{Wu00} cannot represent a Pierce expansion of any number. However, a slight modification of the approach allowed us to follow a similar argument. We further note that Proposition \ref{digit condition lemma}(\ref{digit condition lemma 2}), which does not hold for Engel expansions, played a crucial role in our proof.
\end{remark}

\section{Concluding remarks}

\subsection{Future directions}

We have presented an elementary proof which demonstrated that the set of exceptions to the law of large numbers in Pierce expansions has full Hausdorff dimension. By adapting Wu's method, which was initially developed for Engel expansions, to the context of Pierce expansions, we have bridged gaps in previous proofs in \cite{Ahn24} and provided a more accessible perspective. This work highlights the fractal nature of exceptional sets and opens the door for further investigations in related areas.

This elementary approach underscores the universality of fractal dimensions in number theory and dynamical systems, showing how modifications to established methods can yield simpler, yet effective, proofs for complex results. Moreover, it opens the possibility of extending similar analyses to other exceptional sets in Pierce expansions, or even to other real number representation systems, such as Engel, L{\"u}roth, and Sylvester expansions. From an educational standpoint, the approach is valuable, and offers students a tangible gateway to advanced concepts through concrete examples. For instance, the sets
\begin{align} \label{sets in conclusion}
\left\{ x \in (0,1] : \lim_{n \to \infty} \frac{d_{n+1}(x)}{d_n(x)} = \alpha \right\} \quad \text{and} \quad \left\{ x \in (0,1] : \lim_{n \to \infty} \frac{\log d_{n+1}(x)}{\log d_n(x)} = \beta \right\},
\end{align}
which respectively concern the ratio of two consecutive Pierce expansion digits for $\alpha \in [1, +\infty]$ and the ratio of the logarithms of these digits for $\beta \in [1, +\infty]$, are shown to have full Hausdorff dimension~\cite[Theorem 1.8]{Ahn24} and Hausdorff dimension $1/\beta$ \cite[Theorem 1.12]{Ahn24}, respectively. In this paper, the straightforward definition of the sets, such as those in \eqref{sets in conclusion}, allowed us to provide an elementary proof without relying on more advanced techniques such as symbolic dynamics and the measure-theoretic dimension theory. As demonstrated, for sets similar to \eqref{sets in conclusion}, the simplicity of their definitions enables the application of this elementary approach, where particular sequences with different growth rates are constructed.

Future research could explore the use of specific sequences, as presented in this paper, to bridge the gap between specialists in fractal geometry and the broader mathematical community. By utilizing such sequences, complex results could be made more accessible to general mathematicians, thus potentially opening new avenues for research in number theory and fractal geometry.

\subsection{Limitations and scope}

While this elementary proof provides a more approachable framework, it is worth noting that it may not fully capture the generality of more advanced techniques, such as symbolic dynamics. For example, in \cite{Ahn25}, we showed that various exceptional sets related to Shallit's law of leap years in Pierce expansions have full Hausdorff dimension; the law itself was discussed in \cite{Sha93}. This work introduced a dynamical concept called {\em finite replacement-invariance}. Specifically, the sets studied in \cite{Ahn25} are
\[
S(\alpha) \coloneqq \left\{ x \in [0,1] : \limsup_{N \to \infty} s_N(x) = \frac{1}{\sqrt{2\alpha}} \text{ and } \liminf_{N \to \infty} s_N(x) = - \frac{1}{\sqrt{2\alpha}} \right\}, \quad \alpha \in [0, +\infty],
\]
where
\[
s_N(x) \coloneqq \dfrac{\displaystyle N \sum_{k \in \mathbb{N}} \frac{(-1)^{k+1}}{d_1(x) \dotsm d_k(x)} - \sum_{k \in \mathbb{N}} (-1)^{k+1} \left\lfloor \dfrac{N}{d_1(x) \dotsm d_k(x)} \right\rfloor}{\sqrt{\log N}}, \quad x \in [0,1].
\]
Recall that we constructed a specific sequence with a slower growth rate than typical sequences (in the Lebesgue measure sense) in this paper. However, it is not immediately clear what the quantity $s_N(x)$ represents or what kind of sequences satisfy the $\limsup$ and $\liminf$ conditions. Hence, it appears that our approach, which involves sequences with different convergence rates, may not be directly applicable to studying the sets $S(\alpha)$. Nevertheless, this does not entirely preclude such applications. Future research could explore how these methods might be integrated or extended to make advanced concepts in number theory and fractal geometry more accessible to a broader mathematical audience. By doing so, this line of inquiry holds promise for deepening our understanding of number theory, fractal geometry, and their interconnections.

\section*{Acknowledgments}

I would like to sincerely thank the State University of New York at Buffalo (USA), where part of this research was carried out during my doctoral studies. I am also grateful to the referees and the Editor for their insightful and constructive feedback, which significantly enhanced the quality of this paper. Their guidance has been instrumental in shaping the final version of this work.

\end{document}